\newcommand{\be}{\begin{equation}}
\newcommand{\ee}{\end{equation}}
\newcommand{\beano}{\begin{eqn*}} 
	\newcommand{\eeano}{\end{eqnarray*}}
\newcommand{\ba}{\begin{array}}
	\newcommand{\ea}{\end{array}}
\declaretheoremstyle[headfont=\normalfont]{normalhead}
\newtheorem{theorem}{Theorem}[section]
\newtheorem{theoremalph}{Theorem}[section]
\newtheorem{lemma}[theorem]{Lemma}
\newtheorem{corollary}[theorem]{Corollary}
\newtheorem{proposition}[theorem]{Proposition}
\newtheorem{example}[theorem]{Example}
\newcommand{\Sp}{\mathrm{Sp}}
\newcommand{\Aut}{\mathrm{Aut}}
\newcommand{\Inn}{\mathrm{Inn}}
\newcommand{\Or}{\mathrm{O}}
\numberwithin{equation}{section}
\newcommand{\img}{\mathrm{image}}
\begin{document}
\title{Word Images and Their Impostors in Finite Nilpotent Groups}
\author{Dilpreet Kaur}
\email{dilpreetkaur@iitj.ac.in}
\address{Indian Institute of Technology Jodhpur}

\author{Harish Kishnani}
\email{harishkishnani11@gmail.com}
\address{Indian Institute of Science Education and Research, Sector 81, Mohali 140306, India}

\author{Amit Kulshrestha}
\email{amitk@iisermohali.ac.in}
\address{Indian Institute of Science Education and Research, Sector 81, Mohali 140306, India}

\thanks{We are thankful to William Cocke and Anupam Singh for their interest in our work.}
\subjclass[2010]{20D15, 20D45, 20F10}
\keywords{word maps, finite nilpotent groups, special $p$-groups}
\maketitle

\begin{abstract}
It was shown in \cite{Lubotzky_2014} by Lubotzky that  automorphism invariant subsets of finite simple groups which contain identity are always word images. In this article, we study word maps on finite nilpotent groups and show that for arbitrary finite groups, the number of automorphism invariant subsets containing identity which are not word images, referred to as word image impostors, may be arbitrarily larger than the number of actual word images. In the course of it, we construct a $2$-exhaustive set of word maps on nilpotent groups of class $2$ and demonstrate its minimality in some cases.
\end{abstract}

\section{Introduction}
Let $F_d$ denote the free group on $d$ letters and $w \in F_d$. For a group $G$, let $G^d$ denote the group of $d$-tuples in $G$.
The evaluation of $w$ on $d$-tuples induces a map $\tilde{w} : G^d \to G$. The map $\tilde{w}$ is called the \emph{word map} on $G$ corresponding to the word $w$. 
The image of $\tilde{w}$ is denoted by $w(G)$. 

A subset $A \subseteq G$ is defined to be a \emph{word image candidate} if
\begin{enumerate}[(i).]
\item $1 \in A$, and
\item $A$ is \emph{${\rm Aut}(G)$-invariant}; \emph{i.e.}, if $g \in A$, then $\varphi(g) \in A$ for every automorphism $\varphi$ of $G$.
 \end{enumerate}

All word images are word image candidates. In \cite{Lubotzky_2014}, Lubotzky proved that if $G$ is a finite simple group and $A \subseteq G$ is a word image candidate, then $A = w(G)$ for some $w \in F_d$. In fact, $d = 2$ suffices. His proof heavily uses properties of finite simple groups such as their $3/2$-generation \cite{Guralnick-Kantor_2000}. 
In this paper, we show that if $G$ is not simple, then 
there may exist word image candidates which are not word images. We refer to such word image candidates as \emph{word image impostors}. The groups of our main focus are the finite nilpotent groups.

\begin{theoremalph}\label{TheoremA}
A finite nilpotent group does not contain a word image impostor if and only if it is an abelian group of prime exponent. (Theorem \ref{Lubotzky-for-nilpotent})
\end{theoremalph}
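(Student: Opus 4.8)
\emph{Sketch of proof.} The plan is to reduce to finite $p$-groups and then settle the two implications by explicit constructions at the prime-power level. The reduction rests on three standard facts about a finite nilpotent group $G$ with Sylow decomposition $G = P_1 \times \cdots \times P_r$: the $P_i$ are characteristic, so $\Aut(G) = \Aut(P_1) \times \cdots \times \Aut(P_r)$; elements of distinct $P_i$ commute, so $w(G) = w(P_1) \times \cdots \times w(P_r)$ for every word $w$; and, as an immediate consequence, if some $P_i$ admits a word image impostor $A_i$ then $A_i \times \prod_{j \ne i} P_j$ is one for $G$ (it is $\Aut(G)$-invariant and contains $1$, and if it equalled $w(G)$ we would need $w(P_i) = A_i$, impossible). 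The second tool is a dichotomy for a word map on a finite $p$-group $P$: writing $(a_1,\dots,a_d) \in \mathbb{Z}^d$ for the exponent-sum vector of $w \in F_d$, if some $a_k$ is prime to $p$ then $g \mapsto g^{a_k}$ is a bijection of $P$, so setting every argument except the $k$-th to the identity reduces $\tilde w$ to $g \mapsto g^{a_k}$ and already yields $w(P) = P$; while if every $a_k$ is divisible by $p$ then $w$ lies in the normal subgroup of $F_d$ generated by the $x_k^{\,p}$ and the $[x_k,x_\ell]$, whence $w(P) \subseteq \langle P^p,[P,P]\rangle = \Phi(P)$. Thus every word image of $P$ is either $P$ itself or a subset of $\Phi(P)$.

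\emph{The ``if'' direction.} Suppose $G$ is abelian of prime exponent, $G \cong (\mathbb{Z}/p)^n$. Then $\Aut(G) = \mathrm{GL}_n(\mathbb{F}_p)$ has exactly two orbits on $G$, namely $\{0\}$ and $G \setminus \{0\}$, so the only word image candidates are $\{0\}$ and $G$; both are word images (of $x^p$ and of $x$ respectively), so $G$ has no impostor.

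\emph{The ``only if'' direction.} Observe first that a finite nilpotent group $G$ that is not abelian of prime exponent either (a) has a Sylow subgroup $P$ that is not elementary abelian, or (b) is itself a direct product $G = \prod_{i=1}^r Q_i$ of $r \ge 2$ nontrivial elementary abelian $p_i$-groups with the $p_i$ pairwise distinct primes. In case (a) we have $\Phi(P) \ne 1$, while always $\Phi(P) \ne P$; hence $\{1\} \cup (P \setminus \Phi(P))$ is a word image candidate for $P$ (as $\Phi(P)$ is characteristic) which, by the dichotomy, is not a word image of $P$ --- it is neither contained in $\Phi(P)$ nor equal to $P$ --- so $P$ has an impostor and, by the reduction, so does $G$. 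In case (b), the dichotomy gives $w(Q_i) \in \{1, Q_i\}$ for every $i$ and every $w$, so every word image of $G$ is a ``sub-box'' $\prod_{i \in S} Q_i$ with $S \subseteq \{1,\dots,r\}$; but the word image candidate $A = (Q_1 \times 1 \times \cdots \times 1) \cup (1 \times Q_2 \times 1 \times \cdots \times 1)$ is not of that form, since it has elements with nonzero first coordinate and elements with nonzero second coordinate but none with both nonzero. Hence $A$ is an impostor of $G$, and the ``only if'' direction follows.

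\emph{Where the difficulty lies.} No single step is deep; the point that needs care is the assertion that the candidate sets $\{1\}\cup(P\setminus\Phi(P))$ and the ``union of two axes'' escape \emph{every} word map in \emph{every} number of variables, which is exactly why the dichotomy is packaged via the exponent-sum vector, a statement uniform in $d$. (One may instead route the ``only if'' direction through special $p$-groups and a small exhaustive family of word maps; that apparatus is what is needed for the quantitative claim in the abstract, but not for Theorem~\ref{TheoremA} itself.)
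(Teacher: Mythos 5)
Your proposal is correct and follows essentially the same route as the paper: the same reduction to Sylow factors via $\Aut(G)=\prod\Aut(P_i)$ and $w(G)=\prod w(P_i)$, the same dichotomy that a word image of a $p$-group is either everything or contained in $\Phi(P)$ (the paper's Lemma \ref{if-nonsurjective-then-in-Frattini}, which you reprove directly via the exponent-sum vector), and the same impostor $(P\setminus\Phi(P))\cup\{1\}$ in the non-elementary-abelian case. The only (cosmetic) divergence is in the multi-prime case, where you use the ``union of two axes'' against the sub-box structure of word images, while the paper uses the set of elements of maximal order together with $1$ against the fact that word images of abelian groups are power images; both arguments are valid.
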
 

For a group $G$, a subset $W \subseteq F_d$ is called a $d$-\emph{exhaustive set} for word images on $G$, if for every $v \in F_d$ there exists $w \in W$ such that $v(G) = w(G)$.

For nilpotent groups of class $2$, we exhibit a $2$-exhaustive set in the following theorem. The notation ${\rm exp}(G)$ denotes the exponent of $G$, and $G'$ denotes the commutator subgroup $[G,G]$. Symbols $x,y \in F_2$ are the free generators of $F_2$.

\begin{theoremalph}\label{TheoremB}
Let $G$ be a nilpotent group of class $2$. Let $e = {\rm exp}(G)$, $e' = {\rm exp}(G')$  and $f = {\rm exp}(G/Z(G))$. Then
$$W := \{x^m[x,y^n] \in F_2: m \mid e,  n \mid f \text{ and } n \leq e'\}$$
is a $2$-exhaustive set for word images on $G$.
(Theorem \ref{exhaustive-set-in-nilpotent-class-2})
\end{theoremalph}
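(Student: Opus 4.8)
The plan is to take an arbitrary $v\in F_2$ and transport it, in three reductions, to a word of the displayed shape having the same image on $G$.

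\emph{Step 1: normal form.} Since $G$ has class $2$, every homomorphism $F_2\to G$ kills $[F_2',F_2]$, so the word map $\tilde v$ factors through the free class‑$2$ group $F_2/[F_2',F_2]$, in which every element is uniquely of the form $x^ay^b[x,y]^c$. Hence $v(G)=\{\,g^ah^b[g,h]^c:g,h\in G\,\}$ for suitable integers $a,b,c$, and it suffices to treat $v=x^ay^b[x,y]^c$.

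\emph{Step 2: collecting the power part.} The key subtlety is that one may \emph{not} simply replace $x^ay^b$ by $x^{\gcd(a,b)}$: already in a suitable finite class‑$2$ group $(x^2y^2)(G)\supsetneq(x^2)(G)$, so the collection corrections have to be carried into the commutator exponent. Using the identity $(gh)^k=g^kh^k[h,g]^{\binom k2}$, valid in class $2$, I record four image‑preserving moves on $(a,b,c)$: the substitutions $(g,h)\mapsto(g,g^kh)$ and $(g,h)\mapsto(gh^k,h)$ are bijections of $G^2$ and send $(a,b,c)$ to $(a+kb,\,b,\,c-k\binom b2)$ and to $(a,\,b+ka,\,c-k\binom a2)$ respectively; the relation $g^e=1$ lets us add a multiple of $e$ to $a$ (or to $b$); and $(g,h)\mapsto(g^{-1},h^{-1})$ sends $(a,b,c)$ to $(-a,-b,c)$. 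On the first two coordinates these act like integer row operations together with reduction modulo $e$, so a Euclidean‑algorithm argument (using the sign move to normalise) drives $(a,b)$ to $(m,0)$ with $m=\gcd(a,b,e)$, at the cost of landing the commutator exponent at some integer $c'$ whose precise value is immaterial. I expect the careful execution of this reduction — proving termination and checking that the little detours such as $(0,d)\to(d,d)\to(d,0)$ compose correctly — to be the main bookkeeping obstacle; conceptually it is just ``collect $x^ay^b$ into a single power, paying only a commutator correction''.

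\emph{Step 3: normalising the commutator exponent.} After Step 2 we have $v(G)=(x^m[x,y]^{c'})(G)$. For a fixed $g$, as $h$ runs over $G$ the element $[g,h]$ runs over the subgroup $[g,G]\le G'$, so $(x^m[x,y]^t)(G)=\bigcup_{g\in G}g^m\{z^t:z\in[g,G]\}$. In any finite abelian group $A$ one has $\{z^t:z\in A\}=\{z^{\gcd(t,{\rm exp}A)}:z\in A\}$, and since ${\rm exp}[g,G]$ divides ${\rm exp}(G')=e'$ we get $\gcd(t,{\rm exp}[g,G])=\gcd(\gcd(t,e'),{\rm exp}[g,G])$, so the set $\{z^t:z\in[g,G]\}$ is unchanged on replacing $t$ by $\gcd(t,e')$. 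Therefore $(x^m[x,y]^t)(G)=(x^m[x,y]^{\gcd(t,e')})(G)$ for every $t$; with $t=c'$ and $n:=\gcd(c',e')$ this gives $v(G)=(x^m[x,y^n])(G)$.

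\emph{Conclusion.} Finally, $m=\gcd(a,b,e)$ divides $e$ and $n=\gcd(c',e')$ divides $e'$, so $n\le e'$; and $e'$ divides $f$, because for each $g\in G$ the map $h\mapsto[g,h]$ is a homomorphism $G\to G'$ that is trivial on $Z(G)$, so $[g,h]^f=[g,h^f]=1$ as $h^f\in Z(G)$, and $G'$ is abelian and generated by such commutators; hence $n$ divides $f$. Thus $x^m[x,y^n]\in W$ and $v(G)=(x^m[x,y^n])(G)$. Since $v\in F_2$ was arbitrary, $W$ is a $2$‑exhaustive set for word images on $G$.
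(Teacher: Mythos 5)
Your proof is correct, and it reaches the theorem by a genuinely different route from the paper's. Where the paper invokes \cite{CockeHoChirality} to put an arbitrary word into the form $x^m w_{I,J}$ and then runs two inductive lemmas (Lemma \ref{nilpotent-2 groups-wIJ} and Lemma \ref{product-of-commutators-nilpotent-class-2}) to reach $x^m[x,y^n]$, you exploit the unique normal form $x^ay^b[x,y]^c$ in the free class-$2$ group and perform the collection by hand with explicit image-preserving Nielsen moves; this is more self-contained for $d=2$ and class $2$. The more substantive divergence is in normalising the exponents. The paper does this in two stages (Lemma \ref{prime-divisors-set} arranges $\mathcal P_m\subseteq\mathcal P_e$ and $\mathcal P_n\subseteq\mathcal P_f$, and Theorem \ref{exhaustive-set-in-nilpotent-class-2} then upgrades to $m\mid e$, $n\mid f$ via modular juggling with $p^{r-s}+k$), whereas your Euclidean reduction, because it may add multiples of $e$ to either exponent, lands directly on $m=\gcd(a,b,e)\mid e$, and your observation that $h\mapsto[g,h]$ has image the subgroup $[g,G]$ of exponent dividing $e'$ gives $n=\gcd(c',e')\mid e'$ in one step --- a normalisation that is in fact slightly stronger than the paper's ($n\mid e'$ rather than merely $n\mid f$ and $n\le e'$; combined with $e'\mid f$, which you verify, it places your word in $W$). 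The one place where you owe more detail is Step 2: you should actually write out that the transvections $(a,b)\mapsto(a+kb,b)$, $(a,b)\mapsto(a,b+ka)$, the sign flip, and addition of multiples of $e$ carry $(a,b)$ to $(\gcd(a,b,e),0)$ --- first reduce to $(\gcd(a,b),0)$ by the Euclidean algorithm, then pass through $(\gcd(a,b),e)$ and reduce again --- and verify once that each move changes only the commutator exponent $c$ in the recorded way. That is routine, so I would not call it a gap, but as written it is a promissory note rather than a proof.
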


Subsequently, we exhibit examples where the set $W$ in this theorem is a minimal $2$-exhaustive set (Example \ref{example-64} and Example \ref{example-p8}). It is evident from Theorem \ref{TheoremB} that if $G$ is a nilpotent group of class $2$ and $w \in F_2$, then $w(G)$ is closed under taking inverses and powers.

It follows from Theorem \ref{TheoremA} that special $p$-groups (see \S\ref{preliminaries}) contain word image impostors. By Theorem \ref{TheoremB},
we have a complete description of word images $w(G); ~w \in F_2$, for such groups. For the subclasses of extraspecial $p$-groups, we make very explicit calculations to show that word image impostors may heavily outnumber word images. 

\begin{theoremalph}
(Theorem \ref{counting-impostors-in-extraspecials})
Let $p$ be a prime and $G$ be an extraspecial-$p$ group.
Then the only words images in $G$ are $\{1\}$,  $Z(G)$ and $G$. Further, if $i_G$ is the number of word image impostors in $G$ then,
\begin{enumerate}[(i).]
\item If $p = 2$ then 
$$i_G = 
\begin{cases}
1, \quad \text{if } G\cong Q_2 \\
5, \quad \text{if } G\ncong Q_2
\end{cases}
$$

\item If $p \neq 2$ then
$$i_G = 
\begin{cases}
1, ~\quad \quad \quad \quad \text{if } ${\rm exp}(G) = p$ \\
2^{p+1}-3, \quad \text{if } {\rm exp}(G) = p^2 \text{ and }
|G| = p^3 \\
2^{p+2}-3, \quad \text{if } {\rm exp}(G) = p^2 \text{ and }
|G| > p^3 \\
\end{cases}
$$
\end{enumerate}
\end{theoremalph}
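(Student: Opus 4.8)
The proof splits into two independent parts: \textbf{(A)} showing that the only word images in an extraspecial $p$-group $G$ are $\{1\}$, $Z(G)$ and $G$, and \textbf{(B)} counting the $\Aut(G)$-orbits on $G$, from which $i_G$ is immediate. For (A), fix $G$ extraspecial of order $p^{1+2n}$, so $G'=Z(G)=\Phi(G)$ is cyclic of order $p$, $G/Z(G)\cong\mathbb{F}_p^{2n}$, and $G$ has class $2$. Given $w\in F_d$, collect it as $x_1^{a_1}\cdots x_d^{a_d}\prod_{i<j}[x_i,x_j]^{b_{ij}}$ modulo $\gamma_3(F_d)$; since $\gamma_3(G)=1$ this changes neither $\tilde w$ nor $w(G)$. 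If some $a_i$ is prime to $p$, specializing the other variables to $1$ gives $w(G)\supseteq\{g^{a_i}:g\in G\}=G$ (every element of $G$ lies in a cyclic subgroup, on which powering by an exponent prime to $p$ is bijective), so $w(G)=G$. Otherwise every $a_i$ is divisible by $p$; using that $g\mapsto g^p$ maps $G$ into $Z(G)$ (a homomorphism onto $Z(G)$ when $p$ is odd) and that $G'=Z(G)$, one gets $w(G)\subseteq Z(G)$, and a one- or two-variable specialization shows $w(G)$ is $\{1\}$ or $Z(G)$ according as $w$ evaluates trivially or not. Finally $\{1\}=(xx^{-1})(G)$, $Z(G)=G'=[x,y](G)$ (the commutator form $G/Z(G)\times G/Z(G)\to G'$ is nondegenerate and one commutator generates the cyclic group $G'$), and $G=x(G)$; this proves (A). (Alternatively, invoke Theorem~\ref{exhaustive-set-in-nilpotent-class-2} after this same reduction to $d=2$.)

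For (B), a word image candidate is exactly a set $\{1\}\cup S$ with $S$ a union of $\Aut(G)$-orbits on $G\setminus\{1\}$; so if $N$ is the number of such orbits there are $2^N$ candidates, and since $|G|\ge p^3>p=|Z(G)|>1$ the three genuine word images are pairwise distinct, whence $i_G=2^N-3$. So everything reduces to computing $N$. I will use three facts about $\Aut(G)$: for $p$ odd and every $c\in\mathbb{F}_p^\times$ there is an automorphism inducing $z\mapsto z^c$ on $Z(G)$, so $Z(G)\setminus\{1\}$ is always a \emph{single} orbit; the image of $\Aut(G)$ in $\mathrm{GL}(G/Z(G))$ is (by the classical description of $\Aut$ of extraspecial groups, due to Winter) the full conformal symplectic group $\mathrm{GSp}_{2n}(p)$ when $\exp(G)=p$, the subgroup of it preserving the linear form $\lambda\colon\bar g\mapsto g^p$ up to the symplectic multiplier when $p$ is odd and $\exp(G)=p^2$, and the orthogonal group $\mathrm{O}^{\pm}_{2n}(2)$ of the quadratic form $\bar g\mapsto g^2$ when $p=2$; and the kernel of this map contains the central automorphisms $g\mapsto g\chi(\bar g)$, $\chi\in\mathrm{Hom}(G/Z(G),Z(G))$, which act by translation by $Z(G)$ on each coset $gZ(G)$. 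Since the ``type invariant'' $g^p$ (resp.\ $g^2$) is constant on each coset $gZ(G)$, the central automorphisms fuse the preimage of one $\mathrm{GL}(G/Z(G))$-orbit into one $\Aut(G)$-orbit; hence $N$ equals $1$ plus the number of orbits of the relevant linear group on $(G/Z(G))\setminus\{0\}$.

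Now the case analysis. If $\exp(G)=p$, then $\mathrm{GSp}_{2n}(p)$ is transitive on $(G/Z(G))\setminus\{0\}$, so $N=2$ and $i_G=1$. If $p=2$, the $\mathrm{O}^{\pm}_{2n}(2)$-orbits on $(G/Z(G))\setminus\{0\}$ are the nonzero vectors of each value of the quadratic form, hence two orbits unless that form is anisotropic; for a nondegenerate even-dimensional form over $\mathbb{F}_2$ this happens only in dimension $2$, minus type, i.e.\ only for $G\cong Q_2$, so $N=3$ in general and $N=2$ for $Q_2$, giving $i_G=5$, resp.\ $i_G=1$. The remaining case, $p$ odd with $\exp(G)=p^2$, is the heart. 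Here $\lambda\neq0$, the hyperplane $\ker\lambda$ is odd-dimensional, so its restricted symplectic form has a one-dimensional radical $R=(\ker\lambda)^{\perp}\subseteq\ker\lambda$, and a short compatibility computation (matching the symplectic and $\lambda$ multipliers) forces the image of $\Aut(G)$ in $\mathrm{GL}(G/Z(G))$ to fix $R$ \emph{pointwise}. Hence on $E$, the preimage of $R$ (elementary abelian of order $p^2$, containing $Z(G)$), $\Aut(G)$ acts through the upper-triangular group $\left\{\left(\begin{smallmatrix}c&b\\0&1\end{smallmatrix}\right):c\in\mathbb{F}_p^\times,\,b\in\mathbb{F}_p\right\}$, so $E\setminus\{1\}$ splits into $Z(G)\setminus\{1\}$ plus one orbit for each of the $p-1$ nonzero $R$-coordinates. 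Outside $E$: the elements with $g^p\neq1$ form one orbit (the $\Sp_{2n}(p)$-stabilizer of $v_0\in R\setminus\{0\}$ is transitive on each affine hyperplane $\{\langle v_0,-\rangle=\mu\}$, the conformal part links the different $\mu$, and central automorphisms handle the $Z(G)$-cosets), and when $n\ge2$ the elements with $g^p=1$, $\bar g\notin R$ also form one orbit (the same stabilizer is transitive on $v_0^{\perp}\setminus\langle v_0\rangle$, because the subgroup of it trivial on $v_0^{\perp}/\langle v_0\rangle$ already fuses each coset $\bar g+\langle v_0\rangle$, while $\Sp_{2n-2}(p)$ is transitive on the nonzero vectors of the quotient). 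Thus $N=1+(p-1)+1=p+1$ when $n=1$ (the group $M_{p^3}$, where $\ker\lambda=R$ and the last orbit is empty) and $N=1+(p-1)+1+1=p+2$ when $n\ge2$, giving $i_G=2^{p+1}-3$ and $i_G=2^{p+2}-3$.

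The main obstacle is exactly this last analysis: identifying the distinguished flag $R\subset\ker\lambda\subset G/Z(G)$, proving that $\Aut(G)$ is forced to fix $R$ pointwise (this is what produces the $p-1$ ``extra'' orbits and hence a count exponential in $p$), and then verifying that nothing further splits — in particular that $v_0^{\perp}\setminus\langle v_0\rangle$ is a single orbit when $n\ge2$, which is precisely the discrepancy between the $|G|=p^3$ and $|G|>p^3$ formulas. The rest, once the structure of $\Aut(G)$ is granted, is routine orbit bookkeeping.
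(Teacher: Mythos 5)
Your proposal is correct and follows essentially the same route as the paper: part (A) is the observation that a non-surjective word has image inside $\Phi(G)=Z(G)$ together with transitivity of $\Aut(G)$ on $Z(G)\setminus\{1\}$, and part (B) reduces $i_G=2^N-3$ to counting $\Aut(G)$-orbits via Winter's description of $\Aut(G)$, exactly as in Theorems \ref{three-word-images}--\ref{counting-impostors-in-extraspecials}. The only difference is in execution of the hardest case ($p$ odd, $\exp(G)=p^2$): where you identify the $\Aut(G)$-fixed line $R$ conceptually as the radical of $B|_{\ker\lambda}$ and prove the transitivity claims by Witt extension for alternating forms, the paper (Lemma \ref{Witt-and-Orbit-Odd-p-minus}) cites Winter's (4A) for the fixed vector $w_1$ and exhibits explicit symplectic matrices --- the same facts, with different bookkeeping.
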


The organization of the article is as follows. In \S\ref{preliminaries}, we recall basics of special $p$-groups and recollect a result from \cite{Winter_1972} that describes
automorphisms of extraspecial $p$-groups in terms of some linear groups over finite prime fields.

In subsequent sections \S\ref{words-in-class-2-groups} and \S\ref{impostors-in-extraspecials} we prove main results (Theorem A, Theorem B, Theorem C) of the article.
We conclude the article in \S\ref{special-p-using-word-images} with Theorem \ref{special-through-word-images} which establishes that a nonabelian finite group $G$ in which $\{1\}, Z(G)$ and $G$ are the only word images is necessarily a special $p$-group.

\section{Special $p$-groups and a theorem of Winter}\label{preliminaries}
Let $p$ be a prime. A $p$-group is called \emph{special $p$-group} if its center, derived subgroup and Frattini subgroup coincide and all are isomorphic to an elementary abelian $p$-group. Therefore, special $p$-groups are nilpotent groups of nilpotency class $2$.

For a special $p$-group $G$, both the center $S := Z(G)$ and the quotient group $V := \frac{G}{Z(G)}$ are elementary abelian $p$-groups. Thus we can treat $S$ and $V$ as vector spaces over the prime field $GF(p).$ The map $B_G: V \times V \to S$ defined by $B_G(gZ(G), hZ(G)) = [g,h] := ghg^{-1}h^{-1}$, for $gZ(G), hZ(G) \in V$, is a nondegenrate alternating bilinear map. Also, the image of $B_G$ spans $S$ as a vector space over $GF(p)$, as it is equal to the derived subgroup of $G$. It is evident that the image of $B_G$ is same as the image of word $[x,y] := xyx^{-1}y^{-1} \in F_2$ on the group $G$.

Let $p = 2$. The map $q_G: V \to S$ defined by $q_G(gZ(G))=g^2$, for $gZ(G) \in \frac{G}{Z(G)}$, is a quadratic map. Moreover, the polar map associated with the quadratic map $q_G$ is same as the bilinear map $B_G$ defined above. It follows from \cite[Theorem 1.4]{ObedPaper} that the converse of this result is also true. Let $V$ and $S$ be two vector spaces defined over the prime field $GF(2).$ Let $q: V\to S$ be a quadratic map. The group $G= \{ (v,s) ~:~ v\in V, s\in S \}$ with the group operation
$$(v,s) + (v',s') = (v+v', s+s' + c(v,v'))$$
is a special $2$-group. Here, $c \in Z^2(V,S)$ is the $2$-cocycle corresponding to $q$, as in \cite[Prop. 1.2]{ObedPaper}. In fact, this is a one to one correspondance between isomorphism classes of special $2$-groups and isometry classes of quadratic maps defined over the field $GF(2)$. 

Similar result also holds for odd primes. Let $p$ be an odd prime and $G$ be a special $p$-group. From \cite[Ch. 2, Lemma 2.2$(ii)$]{GorensteinBook} and the fact that the derived subgroup of $G$ is elementary abelian, the map $T_G: V \to S$ defined by $T_G(gZ(G))=g^p$, $gZ(G) \in V$, is linear. Conversely, given a pair $(B,T)$, where $B : V \times V \to S$ is a nondegenerate alternating bilinear map and $T : V \to S$ is a linear map, the following proposition provides a construction of a special $p$-group $G$ such that $B = B_G$ and $T = T_G$.

\begin{proposition}\label{from-b-T-to-special}
Let $p$ be an odd prime. Let $V$ and $S$ be two finite dimensional vector spaces over $GF(p).$  Let $\{v_1 , v_2 ,\dots, v_n \}$ and $\{s_1 , s_2 ,\dots, s_m \}$ be bases of $V$ and $S$, respectively, over $GF(p)$. Let $B : V\times V \to S$ be a nondegenerate alternating bilinear map such that ${\rm span}({\rm image}(B)) = S$ and $T : V\to S$ be a linear map. Then, 
$$G = \langle s_i, v_j : s_i^p  = [s_i , v_j] = [s_i, s_l] = 1, [v_j , v_k] = B(v_j, v_k ), v_j^p = T(v_j) ; 1\leq i,l \leq m, 1\leq j, k\leq n\rangle$$
is a special $p$-group, with $B_G = B$ and $T_G = T$.
Here, the notation $s_i, v_j$ is used for both, the generating symbols of the group $G$ as well as the basis vectors of $S$ and $V$.
\end{proposition}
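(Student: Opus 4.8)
The plan is to construct $G$ explicitly as a central extension of $V$ by $S$ determined by a suitable $2$-cocycle built from the pair $(B,T)$, and then to show that the group defined by the presentation in the statement is isomorphic to this concrete model. The one point requiring genuine care is the choice of cocycle: a $GF(p)$-\emph{bilinear} cocycle $c$ on $V$ always produces a group of exponent $p$, since for such a $c$ the induced $p$-th power map is $v \mapsto \binom{p}{2}\,c(v,v) = 0$ (as $p$ is odd). Hence, in order to realize an arbitrary, possibly nonzero, power map $T_G = T$, I must mix in a genuinely non-bilinear ``carrying'' cocycle, of the kind underlying the extension $\mathbb{Z}/p \hookrightarrow \mathbb{Z}/p^2 \twoheadrightarrow \mathbb{Z}/p$.

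Concretely, I would first fix a direct sum decomposition $V = \ker T \oplus W$ and an ordered basis $v_1,\dots,v_n$ of $V$ adapted to it, with $v_1,\dots,v_r$ a basis of $W$, and put $u_i := T(v_i)$ for $i \le r$. On the set $G^{\ast} = V \times S$ I introduce the multiplication $(v,s)(v',s') = (v+v',\,s+s'+c(v,v'))$, where, writing $v = \sum_j a_j v_j$ and $v' = \sum_j a'_j v_j$ with the coordinates $a_j,a'_j$ chosen in $\{0,1,\dots,p-1\}$, I set $c = \beta_0 + \gamma$ with
\[ \beta_0(v,v') = \sum_{i<k} a_i a'_k\, B(v_i,v_k) \in S, \qquad \gamma(v,v') = \sum_{j=1}^{r}\left\lfloor \frac{a_j + a'_j}{p}\right\rfloor u_j \in S. \]
Here $\beta_0$ is $GF(p)$-bilinear, hence a normalized $2$-cocycle, and $\gamma$ is the pullback along the coordinate projection $V \to (\mathbb{Z}/p)^{r}$ of the direct sum of $r$ copies of the standard carrying cocycle of $\mathbb{Z}/p^2$ over $\mathbb{Z}/p$, pushed forward into $S$ via $e_i \mapsto u_i$; so $\gamma$ is a normalized $2$-cocycle as well, and therefore so is $c$. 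Thus $G^{\ast}$ is a group with central subgroup $\{0\}\times S \cong S$, with $G^{\ast}/(\{0\}\times S) \cong V$, and $|G^{\ast}| = p^{\,n+m}$.

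Next I would verify that $G^{\ast}$ satisfies precisely the defining relations of the presentation under $s_i \mapsto (0,s_i)$ and $v_j \mapsto (v_j,0)$. The relations that only involve $S$ are immediate from centrality of $\{0\}\times S$ and from $p s = 0$. For the commutators, only the alternating part of $c$ contributes; $\gamma$ is symmetric, and the alternating part of $\beta_0$ equals $B$ (using that $B$ is alternating), so $[(v_j,0),(v_k,0)] = (0,B(v_j,v_k))$. For the power relations, a short induction gives $(v,0)^{p} = \bigl(0,\ \sum_{l=1}^{p-1} c(v,\,lv)\bigr)$; the $\beta_0$-part contributes $\binom{p}{2}\beta_0(v,v) = 0$, while the $\gamma$-part reduces coordinatewise to the elementary identity $\sum_{b=1}^{p-1}\lfloor (a+b)/p\rfloor = a$ for $0 \le a \le p-1$, and hence equals $\sum_{i=1}^{r} a_i u_i = T(v)$. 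In particular $(v_j,0)^{p} = (0,T(v_j))$, as required.

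Finally, von Dyck's theorem gives a surjection from the presented group $G$ onto $G^{\ast}$, so $|G| \ge p^{\,n+m}$; conversely, a routine collection argument — push the central generators $s_i$ to the left, sort the $v_j$, and reduce all exponents modulo $p$ using $s_i^{p}=1$ and $v_j^{p}=T(v_j)\in\langle s_1,\dots,s_m\rangle$ — shows every element of $G$ has the form $\prod_i s_i^{b_i}\prod_j v_j^{a_j}$ with $0 \le a_j,b_i < p$, whence $|G| \le p^{\,n+m}$ and $G \cong G^{\ast}$. It then remains to read off the structure: $S$ is central and elementary abelian; $G' \subseteq S$ since $G/S$ is abelian, and $G' = S$ because the commutators $B(v_j,v_k)$ span $S$ by hypothesis; $Z(G) = S$ because if $g$ is central then $[g,v_k]=1$ for all $k$ forces $B(\bar g,\cdot)\equiv 0$, hence $\bar g = 0$ by nondegeneracy of $B$; and $\Phi(G) = G^{p}G' = \mathrm{image}(T)\cdot S = S$, using that for $p$ odd the $p$-th power map on a class-$2$ $p$-group satisfies $g^{p} = T(\bar g)$ modulo the exponent-$p$ subgroup $G'$, so that $G^{p} = \mathrm{image}(T) \subseteq S$. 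Hence $G$ is a special $p$-group with $B_G = B$ and $T_G = T$. One could alternatively bypass the explicit model and instead check the standard consistency relations of the power-commutator presentation directly, but the model above is more transparent and parallels the $p=2$ construction recalled above. I expect the only step that truly needs care to be the verification that the carrying cocycle $\gamma$ reproduces the prescribed power map $T$; everything else is bookkeeping.
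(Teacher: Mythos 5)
Your proof is correct, and it takes a genuinely more thorough route than the paper's. The paper's own argument is a three-line read-off from the presentation: the relations visibly force ${\rm exp}(G)\in\{p,p^2\}$, $S\subseteq Z(G)$, $[G,G]={\rm span}({\rm image}(B))$ and $G^p={\rm image}(T)\subseteq S$, whence $Z(G)=[G,G]=\Phi(G)=S$ using nondegeneracy of $B$; it never addresses whether the presented group actually has order $p^{n+m}$, i.e.\ whether the presentation collapses (and the step ``nondegeneracy gives $S=Z(G)$'' tacitly assumes the cosets of $S$ are faithfully indexed by $V$). Your explicit model $G^{\ast}=V\times S$ with the cocycle $c=\beta_0+\gamma$, combined with von Dyck's theorem and the collection bound $|G|\le p^{n+m}$, supplies exactly this missing non-collapse verification; and your observation that a bilinear cocycle alone forces exponent $p$ for odd $p$, so that a carrying cocycle $\gamma$ is needed to realize a nonzero $T$, is the right key point. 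The computation $(v,0)^p=(0,T(v))$ does go through as you claim: for $a_j\neq 0$ the residues $la_j\bmod p$ run over $\{1,\dots,p-1\}$ as $l$ does, so the sum reduces to $\sum_{b=1}^{p-1}\lfloor(a_j+b)/p\rfloor=a_j$. What the paper's approach buys is brevity; what yours buys is an actual proof that the presented object is the group intended, in a way that parallels the $2$-cocycle construction for $p=2$ recalled earlier in the same section.
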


\begin{proof}
It is clear from the presentation of $G$ that ${\rm exp}(G) = p$ or $p^2$. Thus, $G$ is a $p$-group. Again, from the presentation of $G$, we have $S\subseteq Z(G)$ and from the nondegeneracy of $B$ we have $S=Z(G)$. Since $B$ is bilinear, ${\rm span}({\rm image}(B))  = [G,G]$. Now, the Frattini subgroup $\Phi(G) = G^p[G,G] = S$, as $[G,G]=S$ and $G^p=\img(T)\subseteq S$. 
Thus, $Z(G)=[G,G]=\Phi(G)$ and $G$ is a special $p$-group.
\end{proof}

A special $p$-group $G$ is called \emph{extraspecial $p$-group} if $|Z(G)|=p$. For every $n\in \mathbb{N}$, there are two extraspecial $p$-groups, up to isomorphism, of order $p^{2n+1}$. There is no extraspecial $p$-group of order $p^{2n}$.

If $p$ is an odd prime, then one of the two extraspecial $p$-groups of order $p^{2n+1}$ has exponent $p$. The linear map $T$ corresponding to this group is the zero map.  The extraspecial $p$-group corresponding to nonzero linear map has exponent $p^2$.

Winter, in \cite{Winter_1972}, explained the automorphisms of extraspecial $p$-groups in terms of symplectic group $\Sp(V)$, if $p \neq 2$; and orthogonal group $\Or(V,q)$, if $p = 2$. His main theorem is the following.

\begin{theorem}\cite[Th. 1]{Winter_1972} \label{Winter-Theorem}
Let $p$ be a prime, $G$ be an extraspecial $p$-group and
$V = G/Z(G)$. Let $\Aut_{Z(G)}(G)$ be the subgroup of ${\Aut}(G)$ consisting of automorphisms which act trivially on the  $Z(G)$. Let $\Inn(G)$ be the subgroup of $\Aut_{Z(G)}(G)$ consisting of inner automorphisms of $G$. 

\begin{enumerate}[(i).]
\item There exists $\theta \in \Aut(G)$ such that the order of $\theta$ is $p-1$, $\Aut_{Z(G)}(G)\cap \langle \theta \rangle = \{1\}$, restriction of $\theta$ to $Z(G)$ is a surjective power map, and $\Aut(G)=\langle \theta \rangle \Aut_{Z(G)}(G)$.
\item If $p$ is odd, the quotient $\Aut_{Z(G)}(G)/\Inn(G)$ is isomorphic to a subgroup $Q$ of $\Sp(V)$, where
\begin{enumerate}[(a).]
\item $Q = \Sp(V)$, if $\exp(G) = p$.
\item $Q$ is a proper subgroup of $\Sp(V)$, if $\exp(G) = p^2$.
\end{enumerate}
\item If $p = 2$, then $Q = \Or(V,q)$, where $q:V\to GF(2)$ is the quadratic form associated to $G$.
\end{enumerate}
\end{theorem}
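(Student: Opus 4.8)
The plan is to reconstruct Winter's argument, organizing everything around the two characteristic quotients $Z := Z(G) \cong GF(p)$ and $V := G/Z$ of the extraspecial group $G$. Every $\varphi \in \Aut(G)$ preserves $Z$, hence acts on it by a scalar $\lambda = \lambda(\varphi) \in GF(p)^{\times}$ and descends to $\bar\varphi \in GL(V)$. Feeding this into $\varphi([g,h]) = [\varphi(g),\varphi(h)]$ and, for $p$ odd, $\varphi(g^{p}) = \varphi(g)^{p}$, and using that $B_G$ and $T_G$ depend only on classes modulo $Z$, one obtains $B_G(\bar\varphi \bar g, \bar\varphi \bar h) = \lambda\, B_G(\bar g,\bar h)$ and $T_G(\bar\varphi \bar g) = \lambda\, T_G(\bar g)$ for all $\bar g,\bar h \in V$: that is, $\bar\varphi$ is a symplectic similitude of multiplier $\lambda$ that is compatible with $T_G$. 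When $p = 2$ one has $\lambda = 1$ automatically, and $\varphi(g^{2}) = \varphi(g)^{2}$ forces $q_G(\bar\varphi \bar g) = q_G(\bar g)$, so $\bar\varphi \in \Or(V,q_G)$. This is the structural backbone; what remains is (a) producing enough automorphisms to realise every value of $\lambda$, and (b) identifying the kernel and image of $\varphi \mapsto \bar\varphi$ on $\Aut_{Z(G)}(G)$.

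For part (i): when $p = 2$ there is nothing to prove, since $\lambda$ is always $1$ and so $\theta = 1$ and $\Aut(G) = \Aut_{Z(G)}(G)$. For $p$ odd, fix a generator $\mu$ of $GF(p)^{\times}$. Working with a basis of $V$ adapted to $T_G$ --- when $\exp(G) = p^{2}$ one uses that $\ker T_G$ is a degenerate hyperplane whose radical is a line $\langle r\rangle$ with $B_G(r,-) = T_G(-)$, and splits off a hyperbolic plane $\langle r, f\rangle$ supporting $T_G$ --- one writes down an explicit symplectic similitude $\sigma$ of multiplier $\mu$ with $T_G\sigma = \mu\, T_G$, and lifts it generator by generator to $\theta_{0} \in \Aut(G)$ with $\theta_{0}|_{Z} : s \mapsto s^{\mu}$; the defining relations survive the lift precisely because of the compatibilities of the previous paragraph. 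Replacing $\theta_{0}$ by a suitable power $\theta$ (to remove a possible inner part of $\theta_{0}^{p-1}$, using $\mu^{p} = \mu$ and that $\Inn(G)$ has exponent $p$) yields $\theta$ of order exactly $p-1$ with $\langle\theta\rangle \cap \Aut_{Z(G)}(G) = \{1\}$, the latter because every nontrivial power of $\theta$ acts nontrivially on $Z$. Finally, given arbitrary $\varphi$, pick $k$ with $\mu^{k} = \lambda(\varphi)$; then $\theta^{-k}\varphi$ is trivial on $Z$, so $\varphi \in \langle\theta\rangle\, \Aut_{Z(G)}(G)$.

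For parts (ii) and (iii): restrict $\varphi \mapsto \bar\varphi$ to $\Aut_{Z(G)}(G)$, where $\lambda = 1$, so by the first paragraph the image lies in $\Sp(V)$ ($p$ odd) respectively $\Or(V,q_G)$ ($p = 2$). The kernel consists of the $\varphi$ trivial on both $Z$ and $V$; such a $\varphi$ has the form $g \mapsto g\cdot z(g)$ with $z \in \mathrm{Hom}(V,Z)$, and since conjugation by $g$ realises the homomorphism $\bar h \mapsto B_G(\bar g,\bar h)$, the nondegeneracy of $B_G$ shows that every element of $\mathrm{Hom}(V,Z)$ arises this way, and hence that the kernel equals $\Inn(G)$. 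Therefore $\Aut_{Z(G)}(G)/\Inn(G) \cong Q$, the image of $\varphi \mapsto \bar\varphi$. To pin $Q$ down: if $\exp(G) = p$ with $p$ odd, or if $p = 2$, the group $G$ is reconstructed from $(V,B_G)$ via Proposition \ref{from-b-T-to-special} with $T = 0$, respectively from the quadratic map $q_G$ via the correspondence of \S\ref{preliminaries}; so any $\sigma \in \Sp(V)$ (resp.\ $\sigma \in \Or(V,q_G)$) lifts --- take any lifts of $\sigma$ on the generators and verify the relations, which hold by $\sigma$-invariance of $B_G$ (and of $q_G$) --- giving $Q = \Sp(V)$ (resp.\ $Q = \Or(V,q_G)$). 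If instead $\exp(G) = p^{2}$ with $p$ odd, the same lifting procedure works but now the power relations $v^{p} = T_G(\bar v)$ must be preserved, which forces $T_G\sigma = T_G$; since $T_G \neq 0$ this is a proper condition, so $Q = \{\sigma \in \Sp(V) : T_G\sigma = T_G\} \subsetneq \Sp(V)$.

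The main obstacle is the exponent-$p^{2}$ case for $p$ odd together with the lifting step for $p = 2$. For $p$ odd one needs that $g \mapsto g^{p}$ is genuinely the linear map $T_G$ on all of $G$ --- this rests on $\binom{p}{2} \equiv 0 \pmod{p}$ in the Hall--Petrescu (commutator-collection) expansion and fails verbatim at $p = 2$ --- and for $p = 2$ one must invoke the full equivalence between special $2$-groups and quadratic maps to be sure that an isometry of $q_G$ lifts to an honest automorphism of $G$ rather than merely to a similitude of the underlying $2$-cocycle. Everything else is bookkeeping with symplectic bases and with the action on $Z$.
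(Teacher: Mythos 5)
The paper quotes this theorem from Winter without proof, so there is no internal argument to compare against; your reconstruction follows Winter's original strategy (the homomorphism $\varphi\mapsto\bar\varphi$ into $GL(V)$ with kernel $\Inn(G)$, explicit lifting of $T_G$-compatible symplectic similitudes for $p$ odd and of isometries of $q_G$ for $p=2$ via the presentation, resp.\ the cocycle correspondence) and is correct. The one compressed step is in part (i): for ``a suitable power of $\theta_0$'' to produce order exactly $p-1$ you need to have chosen the similitude $\sigma$ with $\sigma^{p-1}=1$ (e.g.\ scale the $v_i$ by $\mu$ and fix the $w_i$ in a special symplectic basis), so that $\theta_0^{p-1}$ is trivial on both $Z(G)$ and $V$ and hence inner of order dividing $p$, whence $\theta:=\theta_0^{\,p}$ has order $p-1$ while still inducing $s\mapsto s^{\mu}$ on $Z(G)$.
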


\begin{lemma}\label{conjugacy-classes-of-extraspecial-p}
Let $G$ be an extraspecial $p$-group. Let $g \in G \setminus Z(G)$. Then the coset $gZ(G) \subseteq G$ is the conjugacy class of $g$.
\end{lemma}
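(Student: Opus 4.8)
The plan is to show the two inclusions $g^G \subseteq gZ(G)$ and $gZ(G) \subseteq g^G$ separately, exploiting that $G$ has nilpotency class $2$ with $Z(G) = G' = [G,G]$ of order $p$. For the first inclusion, I would take any $h \in G$ and write the conjugate $hgh^{-1} = g[g^{-1},h^{-1}]^{-1}\cdot$ (or more cleanly $hgh^{-1} = g\cdot(g^{-1}hgh^{-1})$), observing that the correction term $[g^{-1}, h^{-1}]$ lies in $G' = Z(G)$. Hence every conjugate of $g$ lies in the coset $gZ(G)$, giving $g^G \subseteq gZ(G)$.

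For the reverse inclusion I would use a counting argument via the orbit–stabilizer theorem: $|g^G| = [G : C_G(g)]$. Since $g \notin Z(G)$, the centralizer $C_G(g)$ is a proper subgroup of $G$, so $|g^G| \geq p$. Combined with the first inclusion $g^G \subseteq gZ(G)$ and the fact that $|gZ(G)| = |Z(G)| = p$, we get $p \leq |g^G| \leq p$, forcing $g^G = gZ(G)$. This is the crux of the argument; the key input is precisely that $|Z(G)| = p$ (the defining property of \emph{extraspecial}, as opposed to merely special), which pins the coset size down to $p$ and makes the squeeze work.

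The main (and only real) obstacle is making sure $C_G(g) \neq G$, i.e. that some element fails to commute with $g$. This is immediate from nondegeneracy of the alternating form $B_G$ on $V = G/Z(G)$: since $g \notin Z(G)$, its image $\bar g$ in $V$ is nonzero, so there exists $\bar h \in V$ with $B_G(\bar g, \bar h) \neq 0$, meaning $[g,h] \neq 1$ for a suitable lift $h$. Thus $C_G(g) \subsetneq G$ and the count closes. Everything else is routine commutator manipulation using $[G,G] \leq Z(G)$, so I would keep that part brief.
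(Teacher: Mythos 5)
Your proposal is correct and follows essentially the same route as the paper: containment of the conjugacy class in $gZ(G)$ via $[G,G]\subseteq Z(G)$, then a counting squeeze using that the class size is a $p$-power greater than $1$ while $|gZ(G)|=p$. The only cosmetic difference is your appeal to nondegeneracy of $B_G$ to get $C_G(g)\neq G$, which is unnecessary since $g\notin Z(G)$ already means, by definition of the center, that some element fails to commute with $g$.
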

\begin{proof}
For an arbitrary $h \in G$, it is clear that $[h,g] \in Z(G)$. Thus, $hgh^{-1} \in gZ(G)$ for all $h \in G$. Since $G$ is a $p$-group and $g$ is noncentral, the size of the conjugacy class of $g$ is divisible by $p$. This forces $gZ(G)$ to be the conjugacy class of $G$.
\end{proof}

\section{Words images on nilpotent groups of class $2$}
\label{words-in-class-2-groups}
Throughout this section, $G$ denotes a finite nilpotent group. In some results of this section, we shall impose
an additional restriction on the nilpotency class. 

\begin{lemma} \label{if-nonsurjective-then-in-Frattini}
Let $G$ be a finite $p$-group and $\Phi(G)$ be its Frattini subgroup. Let $w: G^{(d)} \to G$ be a nonsurjective word map. Then $w(G) \subseteq \Phi(G)$.
\end{lemma}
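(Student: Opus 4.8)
The statement to prove is Lemma~\ref{if-nonsurjective-then-in-Frattini}: if $G$ is a finite $p$-group and $w : G^{(d)} \to G$ is a nonsurjective word map, then $w(G) \subseteq \Phi(G)$.

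The plan is to pass to the Frattini quotient $\bar G = G/\Phi(G)$, which is an elementary abelian $p$-group, hence a vector space over $GF(p)$. The key observations are that word maps commute with group homomorphisms, and that over an abelian group a word $w \in F_d$ acts through its exponent sum vector. First I would recall that the natural projection $\pi : G \to \bar G$ satisfies $\pi(w(g_1,\dots,g_d)) = \bar w(\pi(g_1),\dots,\pi(g_d))$, where $\bar w$ denotes the induced word map on $\bar G$. Since $\bar G$ is abelian, $\bar w$ is the map $(\bar g_1,\dots,\bar g_d) \mapsto \bar g_1^{a_1}\cdots \bar g_d^{a_d}$, where $a_i$ is the exponent sum of the $i$-th generator in $w$.

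Next I would argue by dichotomy on whether $p \mid a_i$ for all $i$. If some $a_i$ is not divisible by $p$, then $\bar w$ is surjective onto $\bar G$: indeed fixing the other coordinates to be trivial, $\bar g_i \mapsto a_i \bar g_i$ is an automorphism of the $GF(p)$-vector space $\bar G$ because $a_i$ is invertible mod $p$. Surjectivity of $\bar w$ forces surjectivity of $w$ itself: given any $g \in G$, pick a preimage of $\pi(g)$ under $\bar w$, lift to a tuple in $G^{(d)}$, and observe that $w$ evaluated on that tuple differs from $g$ by an element of $\Phi(G)$; then a standard argument — using that $\Phi(G)$ consists of non-generators, or more directly an induction up a chief series, or simply the fact that the word map on $G$ surjects onto the word map on every quotient combined with $\Phi(G)$ being the set of non-generators — shows $w$ is onto. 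Actually the cleanest route: if $\bar w$ is surjective then $w(G)\Phi(G) = G$, and since word images are closed under the relevant operations this is enough together with a nilpotency/Frattini argument, but to be safe I would instead directly show that if $p \mid a_i$ for every $i$ then $w(G) \subseteq \Phi(G)$, and contrapositively a nonsurjective $w$ must have all $a_i$ divisible by $p$.

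So the core implication is: if $p \mid a_i$ for all $i$, then $w(G) \subseteq \Phi(G)$. This holds because $\pi(w(g_1,\dots,g_d)) = \sum_i a_i \pi(g_i) = 0$ in $\bar G$, since each $a_i \equiv 0 \pmod p$ and $\bar G$ has exponent $p$; hence $w(g_1,\dots,g_d) \in \ker \pi = \Phi(G)$ for every tuple. Combining: if $w$ is nonsurjective, it cannot be the case that some $a_i$ is coprime to $p$ (else $\bar w$ is onto $\bar G$, and then lifting preimages shows $w(G)\Phi(G)=G$, whence $w(G)=G$ since $\Phi(G)$ is the set of non-generators of the finite group $G$ — any subset of $G$ whose product with $\Phi(G)$ is all of $G$ generates $G$, and a word image that generates and contains a transversal... here one uses that $w(G)$ is a union of cosets structure is not needed, just that $w(G) \supseteq$ a generating set modulo $\Phi(G)$ combined with closure is too weak, so the careful lifting argument is required). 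The main obstacle is precisely this lifting step: proving that surjectivity of $\bar w$ on the Frattini quotient implies surjectivity of $w$ on $G$. I would handle it by induction on $|G|$, reducing modulo a central subgroup of order $p$ contained in $\Phi(G)$ and using the standard fact that an endomorphism-type surjectivity can be lifted through central extensions for word maps, or alternatively invoke the elementary Frattini argument that if $X \subseteq G$ with $\langle X \rangle \Phi(G) = G$ then $\langle X\rangle = G$, applied to show the relevant fibers are nonempty. Everything else is routine bookkeeping with exponent sums.
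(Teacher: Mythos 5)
Your overall strategy---reduce to the elementary abelian quotient $G/\Phi(G)$ and split on whether all exponent sums $a_i$ of $w$ are divisible by $p$---is sound, and your Case ``$p\mid a_i$ for all $i$'' is complete and correct: the induced map on $G/\Phi(G)$ is trivial, so $w(G)\subseteq\Phi(G)$. The genuine gap is exactly the step you yourself flag as ``the main obstacle'': showing that if some $a_i$ is coprime to $p$ then $w$ is surjective on $G$ (not merely on $G/\Phi(G)$). You correctly observe that the Frattini non-generator argument only yields $\langle w(G)\rangle=G$, which is not $w(G)=G$; but the alternatives you then gesture at do not close the gap either. There is no ``standard fact'' that surjectivity of a word map lifts through central extensions---stated at that level of generality it is essentially the lemma you are trying to prove (for nilpotent groups it is \emph{equivalent} to surjectivity on the Frattini quotient implying surjectivity, which is the content of this lemma), and your proposed induction modulo a central subgroup of order $p$ stalls at the same point: knowing $w$ hits every coset of $N$ does not tell you it hits every element of each coset. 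So as written the proof is incomplete.

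The fix is a one-line substitution that makes all the lifting machinery unnecessary: if $a_i=\sum_k e_k$ is the exponent sum of $x_i$ in $w$ and $\gcd(a_i,p)=1$, set every variable other than $x_i$ equal to $1$; the word collapses to $w(1,\dots,h,\dots,1)=h^{a_i}$, and the power map $h\mapsto h^{a_i}$ is a bijection of the finite $p$-group $G$ since $\gcd(a_i,|G|)=1$. Hence $w(G)\supseteq G$ and $w$ is surjective, completing the dichotomy. For comparison, the paper's proof is a two-line citation: by \cite[Lemma~2.3]{CockeHoChirality} a nonsurjective word map on $G$ has the same image as a word of the form $x^{pr}c$ with $c\in[F_d,F_d]$ (the normalization of $w$ by an automorphism of $F_d$ using the gcd of the exponent sums), whence $w(G)\subseteq G^p[G,G]=\Phi(G)$. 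Your quotient-based route is a legitimate alternative and arguably more self-contained, but only once the surjectivity step is replaced by the substitution argument above.
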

\begin{proof}
Since $w$ is nonsurjective, its image $w(G)$ is equal to the image of a word of the form $x^{pr}c$, where $r \in \mathbb Z$ and $c \in [F_d, F_d]$ (see \cite[Lemma 2.3]{CockeHoChirality}). Thus, 
$w(G) \subseteq G^p[G,G] = \Phi(G)$, where the last equality of holds because $G$ is a $p$-group.
\end{proof}

\begin{theorem}\label{Lubotzky-for-nilpotent}
Let $G$ be a finite nilpotent group. Then $G$ does not contain word image impostors if and only if $G$ is an abelian group of prime exponent.
\end{theorem}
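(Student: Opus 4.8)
The plan is to prove the two implications separately: the ``if'' direction is a direct computation, and for the ``only if'' direction I reduce, using the decomposition of a finite nilpotent group into its Sylow subgroups, to a single uniform construction for $p$-groups. For the ``if'' direction, let $G$ be abelian of prime exponent $p$, so that $G$ is an $n$-dimensional vector space over $GF(p)$. Writing $G$ additively, a word $w \in F_d$ evaluates as $w(g_1,\dots,g_d) = \sum_i c_i g_i$, where $c_i \in \mathbb{Z}$ is the exponent sum of the $i$-th variable; hence $w(G) = \sum_i c_i G$, which equals $\{1\}$ if $p \mid c_i$ for all $i$ and equals $G$ otherwise (a $c_j$ prime to $p$ acts invertibly). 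So the only word images are $\{1\}$ and $G$. Since $\Aut(G) = GL_n(GF(p))$ acts transitively on $G \setminus \{1\}$, the only word image candidates are also $\{1\}$ and $G$, so $G$ has no impostor.

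For the ``only if'' direction I argue the contrapositive: let $G$ be a nontrivial finite nilpotent group that is not abelian of prime exponent, and exhibit an impostor. Write $G = \prod_{p \in \pi} G_p$ with $G_p$ the Sylow $p$-subgroup of $G$. Because the $G_p$ are characteristic and of pairwise coprime order, $\Aut(G) = \prod_p \Aut(G_p)$; and because multiplication in a direct product is componentwise, $w(G) = \prod_p w(G_p)$ for every $w \in F_d$. The heart of the matter is the case where $G$ is a $p$-group. Then $G$ is not elementary abelian, so $\Phi(G) \neq \{1\}$ (otherwise $G = G/\Phi(G)$ would be elementary abelian), while always $\Phi(G) \neq G$. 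Put $A := \{1\} \cup (G \setminus \Phi(G))$. Since $\Phi(G)$ is characteristic, $A$ is $\Aut(G)$-invariant, and $1 \in A$, so $A$ is a word image candidate. But $A$ is not a word image: a surjective word has image $G \neq A$, while by Lemma~\ref{if-nonsurjective-then-in-Frattini} a non-surjective word has image inside $\Phi(G)$, which does not contain $A$. Hence $A$ is an impostor; this handles abelian and non-abelian non-elementary-abelian $p$-groups alike.

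It remains to treat $G$ with $|\pi| \geq 2$. If some Sylow subgroup $G_{p_0}$ is not elementary abelian, write $G = G_{p_0} \times K$ with $K = \prod_{q \neq p_0} G_q$, so $\Aut(G) = \Aut(G_{p_0}) \times \Aut(K)$; taking an impostor $A_0 \subseteq G_{p_0}$ from the previous paragraph, the set $A := A_0 \times K$ is a word image candidate (it contains $1$ and is invariant because $A_0$ is $\Aut(G_{p_0})$-invariant), and it cannot equal any $w(G) = w(G_{p_0}) \times w(K)$, since comparing first factors would give $w(G_{p_0}) = A_0$, impossible as $A_0$ is not a word image of $G_{p_0}$. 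If instead every Sylow subgroup is elementary abelian, then $G = \prod_{p \in \pi} V_p$ is abelian of squarefree exponent $\prod_{p \in \pi} p$, which is not prime; by the computation in the ``if'' direction, every word image of an abelian group is a subgroup. Fixing distinct $p_0, p_1 \in \pi$, let $A$ be the set of $g \in G$ that are trivial or have nonzero $G_{p_0}$-component; this is $\Aut(G)$-invariant (automorphisms preserve the characteristic subgroup $G_{p_0}$) and contains $1$, but for nonzero $u \in V_{p_0}$ and $v \in V_{p_1}$ the elements $u + v$ and $-u$ lie in $A$ while their sum $v$ does not, so $A$ is not a subgroup, hence not a word image. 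In every case $G$ has an impostor, completing the contrapositive.

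The only step carrying real content is Lemma~\ref{if-nonsurjective-then-in-Frattini}, which collapses the $p$-group case — the substantive one — to a one-line construction; the sole point requiring a separate idea afterwards is the all-elementary-abelian, several-primes subcase, where $\Phi(G)$ is trivial so one falls back on the elementary fact that word images of abelian groups are subgroups. The remaining bookkeeping — that $\Aut$ and word maps respect the Sylow decomposition, and that the exhibited sets are invariant, contain $1$, are proper, and are not word images — is routine and should be dispatched quickly.
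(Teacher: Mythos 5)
Your proof is correct and follows essentially the same route as the paper's: the same key construction $A = \{1\} \cup (G\setminus\Phi(G))$ combined with Lemma \ref{if-nonsurjective-then-in-Frattini} for the $p$-group case, and the Sylow decomposition for the general case. The only differences are organizational: you argue the contrapositive and make explicit the lifting of an impostor from a non-elementary-abelian Sylow factor to all of $G$ (a step the paper asserts without detail), and in the remaining all-elementary-abelian multi-prime subcase you exhibit a non-subgroup impostor via the $p_0$-components instead of the paper's set of elements of maximal order --- both variants resting on the same fact that word images of abelian groups are images of power maps, hence subgroups.
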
 
\begin{proof}
Let $G$ is an abelian $p$-group of exponent $p$. If $A$ is a word image candidate, then $A = \{1\}$ or $G$. In both cases, $A$ is the image of a word map. Thus, $G$ does not contain word image impostors.

For the converse, let $G$ be a nilpotent group which does not contain word image impostors. 
We first assume that $G$ is a $p$-group. If $G$ is either nonabelian or not of the prime exponent, then, $\Phi(G) = G^p[G,G] \neq 1$. Let $A = (G\setminus \Phi(G)) \cup \{1\}$.
Clearly, $A$ is an automorphism invariant proper subset of $G$ and $1 \in A$. We claim that if $w : G^{(d)} \to G$ is a word map then $A \neq w(G)$.
Assume, to the contrary, that there is a word map $w : G^{(d)} \to G$ such that $A  = w(G)$. Then, using Lemma
\ref{if-nonsurjective-then-in-Frattini}, 
$(G\setminus \Phi(G)) \cup \{1\} = A = w(G) \subseteq \Phi(G)$. This is a contradiction. Hence, $G$ is an abelian group of prime exponent.

Finally, suppose that $G$ is an arbitrary finite nilpotent group which does not contain word image impostors. We write $G$ as a direct product of its Sylow subgroups: 
$G=H_{p_1} \times \dots \times H_{p_k}$. Since ${\rm Aut}(G) = {\rm Aut}(H_{p_1}) \times {\rm Aut}(H_{p_2}) \times \cdots \times {\rm Aut}(H_{p_k})$, we conclude that none of the subgroups $H_{p_i}$ contains impostors. By the theorem in the case of $p$-groups, each $H_{p_i}$ is an abelian group of exponent $p_i$. Thus ${\rm exp}(G) = p_1 p_2 \cdots p_k$.
Let $A'$ denote the subset of $G$ consisting of all elements of order $p_1 \dots p_k$ in $G$. Then, it is easy to check that $A = A' \cup \{1\}$ is a word image candidate and it is not the image of a power map if $k \geq 2$.
Since $G$ is abelian, every word image is the image of a power map. Thus, $k = 1$ and the exponent of $G$ is prime.
\end{proof}

We now introduce some notation. For $r$-tuples $I = (i_1, i_2, \cdots, i_r), J = (j_1, j_2, \cdots, j_r) \in \mathbb Z^r$ and an integer $s < r$, we denote,
\begin{align*}
I_s &:= (i_1, i_2, \cdots, i_s), \quad J_s := (j_1, j_2, \cdots, j_s)\\
|I| &:= i_1 + i_2 + \cdots + i_r \\
|J| &:= j_1 + j_2 + \cdots + j_r \\
I.J & := i_1 j_1 + i_2 j_2 + \cdots + i_rj_r \\
w_{I,J} &:= x^{i_1}y^{j_1}x^{i_2}y^{j_2}\dots x^{i_r}y^{j_r}  \in F_2\\
c_{I,J} &:= [x^{i_1},y^{j_1}][x^{i_2},y^{j_2}]\dots [x^{i_r},y^{j_r}]  \in F_2
\end{align*}
Here, $x,y \in F_2$ are its free generators.
\begin{lemma}\label{nilpotent-2 groups-wIJ}
Let $I, J \in \mathbb Z^r$, be such that $|I| = 0 = |J|$. Then, there exist $\tilde{I}, \tilde{J} \in \mathbb Z^{r}$ such that
for all nilpotent groups of class $2$, the words $w_{I,J}$ and $c_{\tilde{I},\tilde{J}}$
have the same image.
\end{lemma}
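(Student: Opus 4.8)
The plan is to show that, thanks to the class-$2$ hypothesis together with the vanishing of $|I|$ and $|J|$, the word $w_{I,J}$ collapses to a fixed power of the basic commutator $[x,y]$, and then to realize that same power as some $c_{\tilde I,\tilde J}$.

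First I would recall the identities valid in every group $G$ of nilpotency class $2$: commutators are central, so $g^m h^n = h^n g^m [g,h]^{mn}$ and $[g^m,h^n] = [g,h]^{mn}$ for all $g,h\in G$ and $m,n\in\mathbb Z$. Using the first of these repeatedly to push every power of $x$ in $w_{I,J}(g,h) = g^{i_1}h^{j_1}\cdots g^{i_r}h^{j_r}$ to the far left — with no higher-commutator corrections, since $G$ has class $2$ — one obtains
$$ w_{I,J}(g,h) = g^{|I|}\, h^{|J|}\, [g,h]^{c}, \qquad \text{where} \qquad c = -\sum_{1\le l<k\le r} i_k j_l . $$
Now the hypothesis $|I| = 0 = |J|$ is exactly what makes the leading factor $g^{|I|}h^{|J|}$ trivial, leaving $w_{I,J}(g,h) = [g,h]^{c}$ for all $g,h\in G$; hence $w_{I,J}(G) = \{[g,h]^{c} : g,h\in G\}$.

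To finish, observe that for any $\tilde I = (\tilde i_1,\dots,\tilde i_r)$ and $\tilde J = (\tilde j_1,\dots,\tilde j_r)$ in $\mathbb Z^r$ the same two identities give $c_{\tilde I,\tilde J}(g,h) = [g,h]^{\tilde I\cdot\tilde J}$ for every class-$2$ group $G$ and all $g,h\in G$. So it suffices to choose $\tilde I,\tilde J\in\mathbb Z^r$ with $\tilde I\cdot\tilde J = c$; for instance $\tilde I := (c,0,\dots,0)$ and $\tilde J := (1,0,\dots,0)$. Then $c_{\tilde I,\tilde J}(G) = \{[g,h]^{c} : g,h\in G\} = w_{I,J}(G)$, as required.

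I do not expect a genuine obstacle here. The only point needing care is the bookkeeping in the collection step that produces the exponent $c$ (its sign and the range of the double sum), together with the remark that in a class-$2$ group this collection introduces no commutators of weight $\ge 3$. One could phrase everything more slickly inside the free nilpotent group of class $2$ on two generators, where both $w_{I,J}$ and $c_{\tilde I,\tilde J}$ become literally $[x,y]^{c}$ once $|I| = |J| = 0$; but the explicit computation above is all that is needed, and it shows transparently why the hypothesis $|I| = 0 = |J|$ cannot be dropped — otherwise $w_{I,J}$ retains a $g^{|I|}h^{|J|}$ part that no product of commutators can reproduce on a nonabelian $G$.
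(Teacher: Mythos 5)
Your proof is correct, and it is worth noting that it is both more direct and slightly stronger than the paper's argument. The paper proceeds by induction on $r$: it peels off the last block $a^{i_{r-1}}b^{j_{r-1}}a^{i_r}b^{j_r}$, uses $|I|=|J|=0$ to insert $b^{-j_{r-1}}b^{j_{r-1}}$, extracts the central commutator $[a^{i_{r-1}},b^{j_{r-1}}]$ to the front, and repeats, thereby building a length-$r$ word $c_{\tilde I,\tilde J}$ whose entries emerge from the recursion. You instead carry out the full collection in one step, obtaining the closed-form identity $w_{I,J}(g,h)=g^{|I|}h^{|J|}[g,h]^{c}$ with $c=-\sum_{l<k}i_kj_l$ (the sign and range of your sum check out: moving $g^{i_k}$ left past $h^{j_1+\cdots+j_{k-1}}$ contributes $[g,h]^{-i_k(j_1+\cdots+j_{k-1})}$, and no weight-$3$ corrections arise in class $2$). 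This yields the stronger conclusion that $w_{I,J}$ and $c_{\tilde I,\tilde J}=[x^{c},y]$ (padded with zeros to length $r$) are equal as word \emph{maps} on every class-$2$ group, not merely that they have equal images; it also makes explicit the exponent $c$ that the paper only reaches in its subsequent lemma, where $c_{I,J}$ is further reduced to $[x,y^{I\cdot J}]$. In effect your single computation subsumes both that lemma and this one, at the cost of a little bookkeeping that the paper's induction hides.
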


\begin{proof}
Let $G$ be a nilpotent group of class $2$. We use induction on $r$ to show the existence of $\tilde{I}, \tilde{J} \in \mathbb Z^r$ such that $w_{I,J}$ and $c_{\tilde{I},\tilde{J}}$ have the same image.
If $r  = 1$, then $w_{I,J} = 1 \in F_2$ and $c_{(0),(0)} = 1$.
If $r = 2$, then $\tilde{I} = (i_1, 0), \tilde{J} = (j_1, 0)$ satisfy
$w_{I,J} = c_{\tilde{I},\tilde{J}}$.
For $r > 2$, let $g \in  w_{I,J}(G)$, and $a, b \in G$ be such that
$g = w_{I,J}(a,b)$. Then
$g=  w_{I_{r-2},J_{r-2}}(a,b) a^{i_{r-1}} b^{j_{r-1}} a^{i_r} b^{j_r}$.
Since $|I| = 0 = |J|$, we substitute $i_r = -(i_{r-1} + i_{r-2} + \cdots +i_2 + i_1)$ and
$j_r = -(j_{r-1} + j_{r-2} + \cdots + j_2 + j_1)$ to obtain
$$g =  w_{I_{r-2},J_{r-2}}(a,b) a^{i_{r-1}} b^{j_{r-1}} a^{-(i_{r-1} + i_{r-2} + \cdots + i_2 + i_1)} b^{-(j_{r-1} + j_{r-2} + \cdots + j_2 + j_1)}$$

Substituting $a^{-i_{r-1}}$ by $a^{-i_{r-1}} b^{-j_{r-1}} b^{j_{r-1}}$, we get 
$$g =  w_{I_{r-2},J_{r-2}}(a,b) [a^{i_{r-1}}, b^{j_{r-1}}] b^{j_{r-1}}
a^{-(i_{r-2} + \cdots + i_2 + i_1)} b^{-(j_{r-1} + j_{r-2} + \cdots + j_2 + j_1)}$$
Since $G$ is a $2$-step nilpotent group, $[G,G] \subseteq Z(G)$. Thus,
$[a^{i_{r-1}}, b^{j_{r-1}}]$ is central and we bring it to the beginning of the expression so that
$$g = [a^{i_{r-1}}, b^{j_{r-1}}]  w_{I',J'}(a,b)$$
where 
\begin{align*}
I' &= (i_1, i_2, \cdots, i_{r-2}, -(i_{r-2}+i_{r-3} + \cdots + i_2 + i_1)) \\
J' &= (j_1, j_2, \cdots, j_{r-3}, j_{r-2} + j_{r-1}, -(j_{r-1} + j_{r-2} + \cdots + j_2 + j_1))
\end{align*}
are $(r-1)$-tuples of integers with $|I'| = 0 = |J'|$.
Thus, arguing inductively on $r$ we complete the proof.
\end{proof}

\begin{lemma}\label{powers-of-commutators}
Let $G$ be a nilpotent group of class $2$. For $a,b \in G$, denote $[a,b] := aba^{-1}b^{-1}$. Let $n \in \mathbb Z$. Then,
\begin{enumerate}
\item[(i).] $[a,b]^n = [a^n,b] = [a,b^n]$.
Consequently, if $I, J \in \mathbb Z^r$ then $c_{I,J}(a,b)^n = c_{I,J}(a^n,b)$. 
\item[(ii).] $[a^ib^j,a^kb^l]=[a,b]^{il-jk}, \forall a,b\in G$.
\item[(iii).] $(ab)^n=a^n b^n [b,a]^{\frac{n(n-1)}{2}}$.
\item[(iv).] If $w\in F_2$ is a word and $a \in w(G)$ then $a^{n}\in w(G)$.
\end{enumerate}
\end{lemma}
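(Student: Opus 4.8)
Parts (i)--(iii) I would treat as routine commutator calculus in a class-$2$ group. For (i): since $[G,G]\subseteq Z(G)$, the commutator is biadditive, $[xy,z]=[x,z][y,z]$ and $[x,yz]=[x,y][x,z]$, so induction on $n\ge 0$ (and inversion for $n<0$) gives $[a^n,b]=[a,b]^n=[a,b^n]$; applying this to each factor of $c_{I,J}$ yields $c_{I,J}(a,b)^n=c_{I,J}(a^n,b)$. Part (ii) is this biadditivity expanded: $[a^ib^j,a^kb^l]=[a^i,a^k][a^i,b^l][b^j,a^k][b^j,b^l]=[a,b]^{il}[a,b]^{-jk}$. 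For (iii) I would induct on $n$: from $(ab)^{n+1}=(ab)^nab=a^nb^n[b,a]^{\binom n2}ab$ and $b^na=[b,a]^{n}ab^n$ one collects $a^{n+1}b^{n+1}[b,a]^{n+\binom n2}=a^{n+1}b^{n+1}[b,a]^{\binom{n+1}{2}}$, and the case $n<0$ follows by inverting this identity.

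For (iv) the plan is to reduce to a word whose total $y$-exponent vanishes. First, for every $w\in F_2$ there are integers $m,\ell,k$, depending only on $w$, with $w(g,h)=g^mh^\ell[g,h]^k$ for all $g,h$ in every class-$2$ group: the word map factors through the free class-$2$ nilpotent group of rank $2$, namely $F_2/\gamma_3(F_2)$, in which every element has this normal form, with $m$ and $\ell$ the exponent sums of $x$ and $y$ in $w$. Next I would use the standard fact that an automorphism $\psi$ of $F_2$ leaves the image of a word on any group $G$ unchanged, since $(g,h)\mapsto(\psi(x)(g,h),\psi(y)(g,h))$ is a bijection of $G^{2}$ whose inverse is induced by $\psi^{-1}$. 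Because $\Aut(F_2)$ surjects onto $\mathrm{GL}_2(\mathbb{Z})$ acting on the exponent-sum pair $(m,\ell)$, and the $\mathrm{GL}_2(\mathbb{Z})$-orbit of $(m,\ell)$ contains $(\gcd(m,\ell),0)$, we may replace $w$ by a word $v=\psi(w)$ with $v(G)=w(G)$ and with $y$-exponent sum $0$; by the normal form, $v(g,h)=g^{d}[g,h]^{k'}$ for suitable integers $d,k'$. Since $[g,h]^{k'}$ is central, part (i) gives $v(g,h)^n=g^{dn}[g,h]^{k'n}=(g^n)^d[g^n,h]^{k'}=v(g^n,h)\in v(G)=w(G)$ for all $n\in\mathbb{Z}$, which is (iv).

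The only step I expect to require care is this reduction: one must be sure that the manipulations bringing $w$ to the form $x^{d}[x,y]^{k'}$ really preserve the image on $G$, and routing them through $\Aut(F_2)$ (equivalently, through automorphisms of $F_2/\gamma_3(F_2)$) is what makes this clean. A direct change of variables replacing $g,h$ by monomials in $g,h$ inside $G$ would instead force one to track the commutator corrections from part (iii) and cancel them by central adjustments, running into divisibility constraints governed by $\gcd(m,\ell)$; a useful auxiliary fact along that route is that $w(G)$ is a union of cosets of $Z(G)^{\gcd(m,\ell)}$ (replacing $g,h$ by $g\xi,h\zeta$ with $\xi,\zeta\in Z(G)$ multiplies the value by $\xi^m\zeta^\ell$), combined with the identity $w(g^n,h^n)=w(g,h)^n\,[g,h]^{\binom n2(2k+m\ell)}$. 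Passing to the case $\ell=0$ is the shorter path.
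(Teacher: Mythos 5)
Your proposal is correct and follows essentially the same route as the paper: parts (i)--(iii) by standard class-$2$ commutator calculus, and part (iv) by using $\Aut(F_2)$-invariance of word images to reduce $w$ to a normal form $x^{d}$ times a central commutator part (the paper does this via its Lemmas on $w_{I,J}$ and $c_{I,J}$, citing Cocke--Ho; you invoke the normal form in $F_2/\gamma_3(F_2)$ directly, which is the same reduction), and then applying (i) to the substitution $(a,b)\mapsto(a^n,b)$. No gaps.
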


\begin{proof}
$(i)$. First, let $n = -1$. Since $G$ is a nilpotent group of class $2$, conjugation fixes commutators.
Thus $[a,b]^{-1} = [b,a] = a[b,a]a^{-1} = [a^{-1}, b]$.
This allows us to assume that $n \in \mathbb N$, in which case the result follows from \cite[Ch. 2, Lemma 2.2$(i)$]{GorensteinBook}. 

\noindent $(ii).$ It is easy to check that for nilpotent groups of class $2$,
$[g, h_1 h_2] = [g,h_1][g,h_2]$. Thus $[a^i b^j, a^k b^l] = [a^i,a^k b^l][b^j,a^k b^l] = [a^i, b^l][b^j, a^k]$.
Now using part $(i)$, $[a^i, b^l] = [a,b]^{il}$ and $[b^j, a^k] = [b,a]^{jk} = [a,b]^{-jk}$.
Thus $[a^i b^j, a^k b^l] = [a,b]^{il-jk}$.

\noindent $(iii).$ For the case $n > 0$ we refer to
\cite[Ch. 2, Lemma 2.2$(ii)$]{GorensteinBook}.
When $n = -m < 0$, then $(ab)^n = (b^{-1} a^{-1})^m$
and the result follows from $n > 0$ case after an easy computation.

\noindent $(iv).$ Since an arbitrary word in $w \in F_2$ is automorphic to a word of type
$x^m w_{I,J}$ for suitable $I, J \in \mathbb N^r$ with $|I| = 0 = |J|$ (see \cite[Lemma 2.3]{CockeHoChirality}), by Lemma \ref{nilpotent-2 groups-wIJ} we may assume that
$w = x^m c_{I,J}$. Let $g \in x^m c_{I,J}(G)$. Thus, there exist $a, b \in G$ such that $g=a^mc_{I,J}(a,b)$
for suitable $r$-tuples $I = (i_1, i_2, \cdots, i_r)$ and  $J = (j_1, j_2, \cdots, j_r)$.
Now, $g^n=(a^m)^n c_{I,J}(a,b)^n = (a^n)^m c_{I,J}(a^n,b)$, where the last equality holds due to part $(i)$ of this lemma. Thus $g^n$ is indeed in the image of $x^mc_{I,J}$.
\end{proof}

As a consequence of part $(iv)$ of this lemma we observe that if $G$ is a nilpotent group of class $2$ then for each $w \in F_2$, the word image $w(G)$ is closed under taking inverses.

\begin{lemma}\label{product-of-commutators-nilpotent-class-2}
Let $I, J \in \mathbb Z^r$. Then, for all nilpotent groups of class $2$ the words $c_{I,J}$ and $[x, y^{I.J}]$ have
the same image.
\end{lemma}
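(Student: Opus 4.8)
The plan is to compute $c_{I,J}(a,b)$ explicitly for arbitrary $a,b$ in a class-$2$ nilpotent group $G$ using Lemma \ref{powers-of-commutators}, and show it equals $[a,b^{I.J}]$, which will establish that both words have the same image in all such $G$. Recall $c_{I,J} = [x^{i_1},y^{j_1}][x^{i_2},y^{j_2}]\cdots[x^{i_r},y^{j_r}]$. By part $(i)$ of Lemma \ref{powers-of-commutators}, each factor satisfies $[a^{i_k},b^{j_k}] = [a,b]^{i_k j_k}$. Since $[G,G] \subseteq Z(G)$, the group generated by commutators is abelian, so the product telescopes to $[a,b]^{i_1 j_1 + i_2 j_2 + \cdots + i_r j_r} = [a,b]^{I.J}$. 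Applying part $(i)$ once more in the other direction, $[a,b]^{I.J} = [a, b^{I.J}]$. Hence $c_{I,J}(a,b) = [a,b^{I.J}] = [x,y^{I.J}](a,b)$ as functions of $(a,b)$, so the two word maps are literally equal on every class-$2$ nilpotent group, in particular they have the same image.

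The argument is essentially a one-line computation once Lemma \ref{powers-of-commutators}$(i)$ is in hand, so there is no substantial obstacle; the only point requiring a moment's care is the commutativity of the subgroup generated by the commutator values, needed to collect the exponents in the product $\prod_k [a,b]^{i_k j_k}$ into a single power. This is immediate from $[G,G]\subseteq Z(G)$. One should also note that the identity is meant to hold even when $I.J$ is negative or zero; part $(i)$ of Lemma \ref{powers-of-commutators} was proved for all $n \in \mathbb{Z}$, so this causes no difficulty, and when $I.J = 0$ both sides are the constant map $1$.
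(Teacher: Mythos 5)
Your proof is correct and follows essentially the same computation as the paper: expand each factor via Lemma \ref{powers-of-commutators}$(i)$, use centrality of $[G,G]$ to collect exponents, and convert $[a,b]^{I.J}$ back to $[a,b^{I.J}]$. Your observation that the two word maps are literally equal pointwise (making both inclusions automatic) is a slightly cleaner way to finish than the paper's ``tracing back'' remark, but it is the same argument.
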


\begin{proof}
Let $G$ be a nilpotent group of class $2$. Let $g \in c_{I,J}(G)$ and $a, b \in G$ be such that 
$g = c_{I,J}(a,b) = [a^{i_1}, b^{j_1}] \cdots [a^{i_r}, b^{j_r}] $.
Since $[a^{i_k}, b^{j_k}] \in [G,G] \subseteq Z(G)$ for each $k \in \{1, 2, \cdots, r\}$, 
the order of taking product does not matter and we write
$g = \prod_{k = 1}^r [a^{i_k}, b^{j_k}]$.
For each term $[a^{i_k}, b^{j_k}]$ in the product, we use Lemma \ref{powers-of-commutators}$(i)$ to obtain
$$ [a^{i_k}, b^{j_k}] = [a^{i_{k}}, b]^{j_{k}} = [a,b]^{i_k j_k}$$
Thus $g = \prod_{k = 1}^r [a, b]^{i_{k}j_k} = [a, b]^{I.J} = [a,b^{I.J}]$, where the last equality follows
from Lemma \ref{powers-of-commutators}$(i)$. 
Tracing back this calculation one may show that the image of $[x^{I.J},y]$ is contained in the image of
$c_{I,J}$.
\end{proof}

\begin{lemma}\label{prime-divisors-set}
Let $G$ be a nilpotent group of class $2$ and $w \in F_2$ be a word on $G$. 
Let $e := {\rm exp}(G)$, $e' := {\rm exp}(G')$  and $f := {\rm exp}(G/Z(G))$.
For $r \in \mathbb N$, let $\mathcal P_r$ denote the set of prime divisors of $r$.
Then, there exist $m, n \in \mathbb N$ such that $\mathcal P_m \subseteq \mathcal P_e$, $\mathcal P_n \subseteq \mathcal P_f$,
$n \leq e'$, and the word maps $w$ and $x^m[x,y^n]$ have the same image.
\end{lemma}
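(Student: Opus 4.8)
The plan is to reduce an arbitrary $w \in F_2$ to the normal form $x^m[x,y^n]$ in stages, using the lemmas already established, and then to shrink the exponents $m$ and $n$ using the finiteness of $G$ and the exponents $e, e', f$. First I would recall, via \cite[Lemma 2.3]{CockeHoChirality}, that $w$ is automorphic (hence has the same image on every group) to a word of the form $x^M w_{I,J}$ with $|I| = 0 = |J|$ for suitable $I, J \in \mathbb Z^r$. By Lemma \ref{nilpotent-2 groups-wIJ}, $w_{I,J}$ has the same image on $G$ as some $c_{\tilde I, \tilde J}$, and by Lemma \ref{product-of-commutators-nilpotent-class-2} that in turn has the same image as $[x, y^N]$ for $N = \tilde I \cdot \tilde J$. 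Since $[G,G] \subseteq Z(G)$, the factor $[a, b^N]$ is central for any $a,b$, so the image of $x^M [x, y^N]$ on $G$ equals $\{a^M [a, b^N] : a, b \in G\}$; this gives that $w$ and $x^M[x, y^N]$ have the same image on $G$ for some integers $M, N$. (One must be slightly careful that the rewriting produces a single $[x, y^N]$ multiplied by $x^M$ and not some more tangled product, but centrality of commutators makes the bookkeeping routine.)

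Next I would reduce $M$ modulo $e$ and $N$ modulo suitable quantities. Since $\exp(G) = e$, we have $a^M = a^{M'}$ whenever $M \equiv M' \pmod e$, so we may take $0 \le M < e$; and since $x^M[x,y^N]$ with $M$ replaced by $\gcd(M, e)$-type adjustments — more precisely, I want $m \mid e$, so I would argue that the image of $x^M$ as a power map on the abelian-mod-center structure depends only on the subgroup it generates, letting me replace $M$ by a divisor $m$ of $e$ with $\mathcal P_m \subseteq \mathcal P_e$. For the commutator part, $[a, b^N] = [a,b]^N$ by Lemma \ref{powers-of-commutators}(i), and this lies in $G'$, whose exponent is $e'$; hence $[a,b]^N$ depends only on $N \bmod e'$, so we may assume $1 \le N \le e'$. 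Finally, since $b \mapsto b^N$ only sees $b$ modulo elements killed by $N$ inside the relevant quotient, and $\exp(G/Z(G)) = f$ controls which powers $b^N$ can be nonzero in the pairing, I would replace $N$ by an $n$ with $\mathcal P_n \subseteq \mathcal P_f$; combined with $n \le e'$ this gives all three required conditions. The clean way to package these reductions is to observe that the function $(a,b) \mapsto a^M[a,b]^N$ factors through $G/\!\sim$ where the relevant invariants are the cyclic subgroup generated by the power map on $G$ and the value of $N$ in $\mathbb Z/e'$, then pick minimal representatives.

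The main obstacle I anticipate is the simultaneous control of $m$ and $n$: individually, reducing $M \bmod e$ and $N \bmod e'$ is immediate, but to get $m \mid e$ (a divisor, not merely a residue) and to get $\mathcal P_n \subseteq \mathcal P_f$ with $n \le e'$ at the same time requires knowing that the image $\{a^m[a,b]^n\}$ is unchanged when we pass from $M$ to a well-chosen divisor $m$ of $e$ and from $N$ to a well-chosen $n$. The subtle point is that changing $m$ can in principle interact with the commutator term because $a$ appears in both $a^m$ and $[a,b]^n = [a^n, b]$; one needs that for a finite nilpotent class-$2$ group the map $a \mapsto a^m$ has the same image as $a \mapsto a^{m'}$ whenever $m, m'$ have the same prime divisors and both divide $e$ — which follows because on each Sylow component the power map's image is determined by the $p$-adic valuation, and that valuation can be taken minimal. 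I would handle this by working one prime at a time (decomposing $G$ into Sylow subgroups as in the proof of Theorem \ref{Lubotzky-for-nilpotent}), choosing on each $p$-component the least power of $p$ that achieves the same image for both the power map and the commutator map, and then recombining via the Chinese Remainder Theorem to produce the single pair $(m,n)$ with the stated divisibility and size constraints.
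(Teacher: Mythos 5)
Your reduction of $w$ to the normal form $x^M[x,y^N]$ is exactly the paper's first step (the Cocke--Ho normal form followed by Lemma \ref{nilpotent-2 groups-wIJ} and Lemma \ref{product-of-commutators-nilpotent-class-2}), and you correctly identify the real difficulty: $x$ occurs in both the power part and the commutator part, so the two exponents cannot be adjusted independently. The gap is in how you resolve it. The statement you say you need --- that $a \mapsto a^{m}$ and $a \mapsto a^{m'}$ have the same image whenever $m,m'$ have the same prime divisors and both divide $e$ --- is false: in $\mathbb Z/p^2\mathbb Z$ the maps $a \mapsto a^{p}$ and $a \mapsto a^{p^2}$ have the same prime support and both exponents divide $e=p^2$, yet their images are the subgroup of order $p$ and the trivial subgroup. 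What is true, and all the lemma needs, is that a prime $p$ \emph{not} dividing $e$ can be stripped from $m$ because $a\mapsto a^p$ is then a bijection of $G$, and a prime not dividing $f$ can be stripped from $n$ because $\bar b\mapsto \bar b^{\,p}$ is a bijection of $G/Z(G)$ while $[a,b^N]$ depends on $b$ only modulo $Z(G)$. (Note also that you are aiming at $m\mid e$, which is the content of Theorem \ref{exhaustive-set-in-nilpotent-class-2}, not of this lemma; here only $\mathcal P_m\subseteq\mathcal P_e$ is claimed.)

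Even granting the correct bijectivity statements, the coupled word still has to be handled by an explicit substitution, which is the actual content of the paper's proof and is missing from yours: writing $m=p\ell$ with $pp'\equiv 1 \pmod e$, one checks $a^{p\ell}[a,b^n]=(a^p)^{\ell}[(a^p)^{p'},b^n]=(a^p)^{\ell}[a^p,(b^{p'})^n]$, so every value of $x^{p\ell}[x,y^n]$ is a value of $x^{\ell}[x,y^n]$ at the pair $(a^p,b^{p'})$, and the reverse inclusion uses the pair $(a^{p'},b^{p})$; the analogous computation for $n$ uses $b^{pp'}z=b$ with $z\in Z(G)$ invisible in the commutator. Your alternative --- Sylow decomposition plus CRT, choosing on each component ``the least power of $p$ that achieves the same image for both the power map and the commutator map'' --- quietly reinstates the independence of the two factors that you had just flagged as the obstacle, and is not carried out. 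As written, the proposal does not establish the lemma; it needs the substitution argument (or an equivalent) at its core.
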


\begin{proof}
By \cite[Lemma 2.3]{CockeHoChirality}, Lemma \ref{nilpotent-2 groups-wIJ} and Lemma \ref{product-of-commutators-nilpotent-class-2},
we may assume that
$w=x^m[x,y^n]$ for some $m,n \in \mathbb N$. 
Let $g = w(a,b) = a^m[a,b^n] \in w(G)$.
Suppose, $p \in \mathcal P_m \setminus \mathcal P_e$. Then
${\rm gcd}(p,e) = 1$ and there exists $p' \in \mathbb N$ such that $pp' \equiv 1 \mod e$.
Thus $a^{pp'} = a \in G$. Let $\ell \in \mathbb N$ be such that
$m = p\ell$. Let $w' = x^{\ell}[x,y^n]$.
Then 
$g = a^{p\ell}[a^{pp'},b^n] = (a^{p})^{\ell}[(a^p)^{p'},b^n]
= (a^{p})^{\ell}[(a^p),b^{np'}]$. Thus, $g \in w'(G)$.

Conversely, let $g = w'(a,b) \in G$. Then,
$$g = a^{\ell}[a,b^n] = (a^{pp'})^{\ell}[a^{pp'}, b^n] = (a^{p'})^m[a^{p'},b^{np}],$$
and we conclude that $g \in w(G)$. Therefore, $w(G) = w'(G)$. 
A successive iteration of this process allows us to assume that
$\mathcal P_m \setminus \mathcal P_e = \emptyset$, i.e.
$\mathcal P_m \subseteq \mathcal P_e$.\\

Now, we show that we may also assume that $\mathcal P_n \subseteq \mathcal P_f$. Suppose, $p \in \mathcal P_n \setminus \mathcal P_f$. 
Then
${\rm gcd}(p,f) = 1$ and there exists $p' \in \mathbb N$ such that $pp' \equiv 1 \mod f$.
Thus $b^{pp'}z = b \in G$ for some $z \in Z(G)$. Let $\ell \in \mathbb N$ be such that
$n = p\ell$. Let $g = w(a,b)$. Then $g = a^m[a,b^n] = a^m[a, b^{p\ell}]$. Thus, $g \in w'(G)$, where $w' = x^m[x,y^{\ell}]$.

Conversely, let $g = w'(a,b) \in G$. Then, 
$$g = a^m[a,b^{\ell}] = a^m[a,z^{\ell}b^{pp'\ell}] = a^m[a,(b^{p'})^{n}]
.$$
Thus, $g \in w(G)$, and we conclude that $w(G) = w'(G)$.
A successive iteration of this process allows us to 
assume that $\mathcal P_n \subseteq \mathcal P_f$. \\

Finally, since $[x,y^n] = [x,y]^n$ and $e' = {\rm exp}(G')$, the assumption $n \leq e'$ is natural.
\end{proof}

In the next theorem we claim that the assumptions
$\mathcal P_m \subseteq \mathcal P_e$ and $\mathcal P_n \subseteq \mathcal P_f$ may be strengthened to $m \mid e$
and $n \mid f$, respectively.

\begin{theorem}\label{exhaustive-set-in-nilpotent-class-2}
Let $G$ be a nilpotent group of class $2$. Let $e = {\rm exp}(G)$, $e' = {\rm exp}(G')$  and $f = {\rm exp}(G/Z(G))$. Then
$$W := \{x^m[x,y^n] : m \mid e,  n \mid f \text{ and } n \leq e'\} \subseteq F_2$$
is a $2$-exhaustive set for word images on $G$.
\end{theorem}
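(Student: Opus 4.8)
The plan is to start from Lemma~\ref{prime-divisors-set}, which already reduces any $w \in F_2$ to a word of the form $x^m[x,y^n]$ with $\mathcal P_m \subseteq \mathcal P_e$, $\mathcal P_n \subseteq \mathcal P_f$ and $n \le e'$, and to upgrade the divisibility-of-prime-sets conditions to genuine divisibility $m \mid e$, $n \mid f$. I would treat the two parameters separately, handling $m$ first and then $n$.

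For the parameter $m$: suppose $\mathcal P_m \subseteq \mathcal P_e$ but $m \nmid e$. The key observation is that for any $a \in G$, the order of $a$ divides $e$, so the cyclic subgroup $\langle a \rangle$ only ``sees'' the exponent $m \bmod (\text{ord}(a))$; more robustly, since every prime dividing $m$ divides $e$, I would write $m = m_0 \cdot u$ where $m_0 \mid e$ is obtained by taking, for each prime $p$, the $p$-part $p^{v_p(m)}$ capped at $p^{v_p(e)}$, and where $u$ is a product of prime powers whose exponents exceed those in $e$. The claim is then $x^m[x,y^n]$ and $x^{m_0}[x,y^n]$ have the same image. For the forward inclusion one uses $a^m = a^{m_0 \cdot u}$ and the fact that $a^u$ ranges over the same set as $a$ does when $\gcd(u, \text{ord}(a))$'s relevant part is controlled --- more precisely, since $p^{v_p(e)} \mid m_0$ for every prime $p \mid m$, raising to the $u$-th power is a bijection on the relevant Sylow pieces, so I can solve $a = (a')^{\text{something}}$ and substitute, mirroring the argument in Lemma~\ref{prime-divisors-set}. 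A cleaner route: iterate the single-prime step from Lemma~\ref{prime-divisors-set} but now allowing the reduction $m \mapsto m/p$ whenever $v_p(m) > v_p(e)$, using that $a^{p^{v_p(e)}}$ already generates the full $p$-part of $\langle a\rangle$ so an extra factor of $p$ in the exponent is absorbed by reparametrizing $a \mapsto a^{p'}$ on the $p$-part and fixing the prime-to-$p$ part. Running this to termination gives $m \mid e$.

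For the parameter $n$: by Lemma~\ref{powers-of-commutators}(i), $[x,y^n] = [x,y]^n$, and the image of $[x,y]^n$ on $G$ depends only on $n$ modulo $e'= {\rm exp}(G')$; combined with the already-achieved $\mathcal P_n \subseteq \mathcal P_f$, I would run the same capping/iteration argument on $n$: whenever $v_p(n) > v_p(f)$, reduce $n \mapsto n/p$ using $b^{p^{v_p(f)}} z = b$ for some $z \in Z(G)$ (so $b^{p}$ hits the full $p$-part of $bZ(G)$ in $G/Z(G)$) together with centrality of $z$ to strip it off inside the commutator, exactly as in the second half of the proof of Lemma~\ref{prime-divisors-set}. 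Since we also keep $n \le e'$ throughout (replacing $n$ by $n \bmod e'$ using $[x,y]^{e'} = 1$ in $G$), we land at $n \mid f$ with $n \le e'$. Assembling: every $w \in F_2$ has $w(G) = (x^m[x,y^n])(G)$ for some $m \mid e$, $n \mid f$, $n \le e'$, which is exactly the assertion that $W$ is $2$-exhaustive.

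The main obstacle I anticipate is the $m \mid e$ step: the subtlety is that $a \in G$ is arbitrary and its order is only a divisor of $e$, so an exponent $m$ with $v_p(m) > v_p(e)$ is not automatically congruent mod $\text{ord}(a)$ to a smaller one in a way compatible with the simultaneous substitution inside $[x,y^n]$ --- one must verify that the substitution $a \mapsto a^{p'}$ used to kill the excess $p$-power does not disturb the commutator term, which works because $[a^{p'},b^{n}] = [a,b]^{p' n}$ is again a power of $[a,b]$ and can be reabsorbed by adjusting $n$'s companion exponent, but bookkeeping the two substitutions simultaneously (and checking the reverse inclusion) is where care is needed. The $n$-step is easier since everything there is literally a power of the single central element $[x,y]$.
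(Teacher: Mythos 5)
Your overall strategy --- start from Lemma \ref{prime-divisors-set} and upgrade $\mathcal P_m \subseteq \mathcal P_e$, $\mathcal P_n \subseteq \mathcal P_f$ to $m \mid e$, $n \mid f$ by an iterated, one-prime-at-a-time reduction implemented through a reparametrization of the arguments --- is exactly the paper's. The genuine gap is in the mechanism you propose for carrying out a reduction step. Writing $m = m_0u$ with $m_0$ the ``capped'' divisor of $e$ and $u$ the excess, the substitution $a \mapsto a^u$ does handle the power term ($a^m = (a^u)^{m_0}$), but it wrecks the commutator term: to convert $[a,b^n]$ into $[a^u,(b')^n] = [a,b']^{un}$ you must divide by $u$ inside the subgroup $[a,G]$, whose exponent divides $e'$ (hence $f$), and $u$ is by construction a product of primes dividing $e$ which may well divide $f$. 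Concretely, for an extraspecial $p$-group and $m = p^3$ one has $u = p$ or $p^2$ and $[a^u,d^n] = [a,d]^{un} = 1$ for every $d$, while $[a,b^n]$ need not be trivial; so the forward inclusion fails for your substitution even though the two images are in fact equal. The auxiliary claims you lean on are also false as stated: $a^{p^{v_p(e)}}$ does not generate the $p$-part of $\langle a\rangle$, it annihilates it; ``$a \mapsto a^{p'}$ on the $p$-part'' presupposes an inverse of $p$ modulo a power of $p$; and the identity $b^{p^{v_p(f)}}z = b$ you invoke in the $n$-step would require $p^{v_p(f)} \equiv 1 \pmod f$. (The step of Lemma \ref{prime-divisors-set} you want to imitate only treats primes \emph{not} dividing $e$ or $f$, where the exponent being stripped off is itself already a unit; that is precisely what breaks here.)

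The missing ingredient --- the one real idea in the paper's proof --- is that $m_0$ should be multiplied not by the literal excess $u$ but by a \emph{unit} modulo $e$ that is congruent to that excess where it matters. Writing $m = p^r\ell$, $e = p^sk$ with $r > s$ and $\gcd(p,\ell) = \gcd(p,k) = 1$, one checks that $m \equiv p^s\ell\,(p^{r-s}+k) \pmod e$ and that $p^{r-s}+k$ is coprime to $e$. With $t$ an inverse of $p^{r-s}+k$ modulo $e$, the single substitution $(a,b) \mapsto (a^{p^{r-s}+k},\, b^{t})$ moves the entire excess power of $p$ through the power term and the commutator term simultaneously, since Lemma \ref{powers-of-commutators}$(i)$ lets the unit and its inverse cancel inside $[\cdot,\cdot]$; the inverse substitution gives the reverse inclusion. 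The analogous congruence modulo $f$ (using that $b^f$ is central) repairs the $n$-step. Once this unit is in hand, your iteration terminates at $m \mid e$ and $n \mid f$ as you describe.
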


\begin{proof}
Let $w \in F_2$. From Lemma \ref{prime-divisors-set},
we may assume that
$w=x^m[x,y^n]$, where $\mathcal P_m \subseteq \mathcal P_e$, 
$\mathcal P_n \subseteq \mathcal P_f$ and $n \leq e'$. 

Suppose, $m \nmid e$. Then, there exists a prime $p$ and integers $r, s, \ell, k \in \mathbb N$ with $r > s$ such that
$m = p^r\ell$, $e = p^sk$ and ${\rm gcd}(p,\ell) = 1 = {\rm gcd}(p, k)$. 
We observe that
$m \equiv p^s \ell \left(p^{r-s} + k\right) \mod e$
and ${\rm gcd}(p^{r-s} + k, e) = 1$. Thus, there exists $t \in \mathbb N$ such that $t(p^{r-s}+k) \equiv 1 \mod e$. \\

Let $w' = x^{{p^s} \ell}[x,y^n]$. 
We claim that $w(G) = w'(G)$. Let $g = w(a,b)$. Then,
\begin{align*}
g = a^m[a,b^n] &= \left(a^{p^{r-s} + k}\right)^{p^s\ell}[a, b^n] \\
&=\left(a^{p^{r-s} + k}\right)^{p^s\ell}[a^{t(p^{r-s} + k)}, b^n] \\
&= \left(a^{p^{r-s} + k}\right)^{p^s\ell}[a^{p^{r-s} + k}, b^{nt}].
\end{align*}
Thus $g \in w'(G)$.\\

Conversely, if $g \in w'(G)$. Then, 
\begin{align*}
g = a^{p^s \ell}[a,b^n] &= a^{t(p^{r-s} + k)p^s \ell}[a^{t(p^{r-s} + k)},b^n] \\
& = a^{tm}[a^t, (b^{p^{r-s}+k})^n].
\end{align*}
Thus, $g \in w(G)$, and the claim follows. A successive iteration of this process allows us to assume that $m \mid e$.
We follow a similar process to show that we may assume that $n \mid f$.
Suppose, $n \nmid f$. Then, there exists a prime $p$ and integers $r, s, \ell, k \in \mathbb N$ with $r > s$ such that
$n = p^r\ell$, $f = p^sk$ and ${\rm gcd}(p,\ell) = 1 = {\rm gcd}(p, k)$.
We observe that
$n \equiv p^s \ell \left(p^{r-s} + k\right) \mod f$
and ${\rm gcd}(p^{r-s} + k, f) = 1$. Thus, there exists $t \in \mathbb N$ such that $t(p^{r-s}+k) \equiv 1 \mod f$. \\

Let $w' = x^m[x,y^{{p^s} \ell}]$. 
We claim that $w(G) = w'(G)$. Let $g = w(a,b)$. Then,
for some $z \in Z(G)$,
\begin{align*}
g = a^m[a,b^n] = a^m[a, (bz)^{p^s \ell \left(p^{r-s} + k\right)}]
= a^m[a, b^{p^s \ell \left(p^{r-s} + k\right)}]
\end{align*}
Thus $g \in w'(G)$.\\

Conversely, if $g \in w'(G)$. Then, 
\begin{align*}
g = a^m[a,b^{{p^s} \ell}]
= a^m[a, b^{p^s \ell t(p^{r-s}+k)}]
= a^m[a, b^{nt}]
\end{align*}
Thus, $g \in w(G)$, and the claim follows. A successive iteration of this process allows us to assume that $n \mid f$. 

These arguments shows that
$W = \{x^m[x,y^n] : m \mid e \text{ and } n \mid f, e \leq e'\}$ is a $2$-exhaustive set for word images on $G$.
\end{proof}

We show that in many cases $W$ is a minimal $2$-exhaustive set.
We pick these examples from the class of special $p$-groups.
In special $p$-groups, $e = p^2$ and $f = p$. Thus, $W = \{1, x, x^p, [x,y], x^p[x,y]\}$ is $2$-exhaustive set for special $p$-groups. We express these words in terms of maps $q, B$ and $T$ associated to $G$ as in \S \ref{preliminaries}.

When $p=2,$ we define the map $q+B : V \times V \to S$ by 
$$(q + B)(gZ(G), hZ(G)) = q(gZ(G)) + B(gZ(G), hZ(G))$$ for $gZ(G), hZ(G) \in V$. 

For odd primes $p$, we define the map $T+B : V \times V \to S$ by 
$$(T+B)(gZ(G), hZ(G)) = T(gZ(G))+ B(gZ(G), hZ(G))$$
for all $gZ(G), hZ(G) \in V$. 

The images of maps $q$ and $q+B$ are same as the images of words $x^2$ and $x^2[x,y]$, respectively, for special $2$-groups. The images of maps $T$ and $T+B$ are same as the images of words $x^p$ and $x^p[x,y]$, respectively, for special $p$-groups, when $p$ is odd.

\begin{example}\label{example-64}
\normalfont 
Let $V$ and $S$ be $3$-dimensional vector spaces over $GF(2)$. Let $q : V \to S$ the quadratic map, which is explicitly defined by the following, for a fixed choice of bases of $V$ and $S$.
$$q(\alpha,\beta,\gamma) = (\alpha^2+\beta^2+\alpha \beta, \alpha^2+\alpha \gamma,\beta\gamma)$$
Let $B : V \times V \to S$ the polar map of $q$. Then $B$ is bilinear, and, for the same choice of bases, is given by
$$B( (\alpha_1, \beta_1, \gamma_1), (\alpha_2, \beta_2, \gamma_2)) = (\alpha_1\beta_2-\alpha_2\beta_1, \alpha_1\gamma_2-\gamma_1\alpha_2, \beta_1\gamma_2-\gamma_1\beta_2)$$

Let $G$ be the special $2$-group associated with $q$. The order of $G$ is $2^6 = 64$. We claim that the images of three maps $q, B$ and  $q+B$ are distinct nontrivial proper subsets of $G$. It is clear from the following table $B$ is surjective. Therefore its image is same as center of the group $G$.
 
\begin{center}
 \begin{tabular}{|c|c|c|}
 \hline
 $v_1= (\alpha_1, \beta_1, \gamma_1)$ &  $v_2=(\alpha_2, \beta_2, \gamma_2)$ & $B(v_1, v_2)$\\
 \hline
 $(\alpha_1, \beta_1, \gamma_1)$ & $(0,0,1)$ & $(0, \alpha_1, \beta_1)$\\
 \hline
 $(0,1,\gamma_1)$ & $(1,0,\gamma_2)$ & $(1, \gamma_1, \gamma_2)$\\
\hline
\end{tabular} 
\end{center}

We claim that $(0,0,1)\notin \img(q).$ If possible, let  $q(\alpha,\beta,z)=(0,0,1)$. The definition of $q$ forces $\beta=\gamma=1$. We check that
 $q(0,1,1)=q(1,1,1)=(1,0,1)$, and conclude that the map $q$ is not surjective. Further, $\img(q)$ is different from $\img(q+B)$, since
$$(0,0,1) = q(0,0,1)+B( (0,0,1), (0,1,0) ) \in \img(q+B)
$$
However, $q+B$ is not surjective as $(1,1,1)\notin \img(q+B)$. This can be easily verified from the following table, with $v_2= (\alpha_2, \beta_2, \gamma_2)$.
 
 \begin{center}
 \begin{tabular}{|c|c|c|c|}
 \hline
 $v_1$ & $q(v_1)+B(v_1, v_2)$ & $v_1$ & $q(v_1)+B(v_1, v_2)$\\
 \hline
 $(0,0,0)$ & $(0,0,0)$ & $(1, 0, 0)$ & $(1+\beta_2, 1+\gamma_2, 0)$\\
 \hline
 $(0,1,0)$ & $(1-\alpha_2,0,\gamma_2)$ & $(0,0,1)$ & $(0, \alpha_2, \beta_2)$\\
 \hline
 $(1,1,0)$ & $(1+\beta_2-\alpha_2,1+\gamma_2,\gamma_2)$ & $(1, 0, 1)$ & $(1+\beta_2, \gamma_2-\alpha_2, \beta_2)$\\
 \hline
 $(0,1,1)$ & $(1-\alpha_2,-\alpha_2,1+\gamma_2-\beta_2)$ & $(1,1,1)$ & $(1+\beta_2-\alpha_2, \gamma_2-\alpha_2, 1+\gamma_2-\beta_2)$\\
 \hline
 \end{tabular} 
 \end{center}
 \end{example}
We have verified using GAP that the group $G$ of this example is the only special $p$-group of order less than $256 = 2^8$ for which all five words in $W$ have distinct images. For groups of order $p^8$, such examples always exist. More explicitly, we have the following:

\begin{example}\label{example-p8}
\normalfont 
Let $V$ and $S$ be $4$-dimensional vector spaces over $GF(p)$. Consider the bilinear map $B: V\times V \to S$  defined by
 \begin{center}
  $B((\alpha_1, \beta_1, \gamma_1, w_1), (\alpha_2, \beta_2, \gamma_2, \delta_2)) = (\alpha_1\beta_2-\alpha_2\beta_1, \alpha_1\gamma_2-\gamma_1\alpha_2, \beta_1\gamma_2-\gamma_1\beta_2, \alpha_1\delta_2-\alpha_2\delta_1)$.
 \end{center}
 If $p = 2,$ then define $q:V\to S$ by
 $q(\alpha,\beta,\gamma,\delta)= (\beta^2+\alpha \beta, \alpha \gamma, \beta \gamma, \alpha \delta)$.
 If $p\neq 2,$ then define $T: V \to S$ by 
 $T(\alpha,\beta,\gamma,\delta)= (\beta,0,0,0)$. We note that $q$ is a quadratic map and $T$ is a linear map.
Let $G$ be the special $p$-group of order $p^8$ associated with $q$ or $(B,T)$, according as if $p = 2$ or $p \neq 2$.

We claim that if $w_1 \neq w_2 \in W$ then $w_1(G) \neq w_2(G)$. To prove the claim, we first notice that if $p = 2$, the images of $B, q$ and $q+B$ are nontrivial proper subsets of $S$; and if $p \neq 2$, then the images of
$B,T$ and $T+B$ are nontrivial proper subsets of $S$.

We show that $B$ is not surjective. In fact, $(0,0,1,1)\notin \img(B)$.
If possible, let $$B((\alpha_1, \beta_1, \gamma_1, \delta_1), (\alpha_2, \beta_2, \gamma_2, \delta_2))=(\alpha_1\beta_2-\alpha_2\beta_1, \alpha_1\gamma_2-\gamma_1\alpha_2, \beta_1\gamma_2-\gamma_1\beta_2, \alpha_1\delta_2-\alpha_2\delta_1)=(0,0,1,1)$$
Since $\alpha_1\delta_2-\alpha_2\delta_1=1$, both $\alpha_1$ and $\alpha_2$ can't be zero simultaneously. If $\alpha_1=0$, then $\alpha_2\neq 0$, $\alpha_1\beta_2-\alpha_2\beta_1=0$ and  $\alpha_1\gamma_2-\gamma_1\alpha_2=0$ force $\beta_1=0$ and $\gamma_1=0$. This, in turn, implies $\beta_1\gamma_2-\gamma_1\beta_2=0,$ contradicting $\beta_1\gamma_2-\gamma_1\beta_2=1.$ The case $\alpha_1 \neq 0$ may be handled similarly.

If $p = 2$, we show that $\img(B) \neq \img(q)$.
Note that $b((0,1,0,0), (0,0,1,0) = (0,0,1,0)$. If possible, let $q(\alpha,\beta,\gamma,\delta)= (\beta^2+\alpha \beta, \alpha \gamma, \beta \gamma, \alpha \delta) =(0,0,1,0)$. Then $\beta=\gamma=1$.
Now, if $\alpha=0$, then $\beta^2+\alpha \beta=1$. If $\alpha=1$, then, $\alpha z=1$. Thus, $q(\alpha,\beta,z,w)\neq (0,0,1,0)$ for all $(\alpha,\beta,z,w)$.

If $p \neq 2$ then we show that 
$\img(B) \neq \img(T)$. Note that $B((0,1,0,0), (0,0,1,0)) = (0,0,1,0)$ and $T(\alpha,\beta,\gamma,\delta)\neq (0,0,1,0)$ for all $(\alpha,\beta,\gamma,\delta)$.

If $p = 2$, we show in the following table, that $\img(q+B)$ is surjective.
 \begin{center}
 \begin{tabular}{|c|c|c|}
 \hline
 $v_1$ &  $v_2$ & $q(v_1)+B(v_1, v_2)$\\
 \hline
 $(1, 0,\gamma_1, \delta_1)$ & $(1,1,\gamma_2,\delta_2)$ & $(1, \gamma_2, \gamma_1, \delta_2)$\\
 \hline
 $(0,1,\gamma_1,\delta_1)$ & $(1,1,\gamma_2,\delta_2)$ & $(0, \gamma_1, \gamma_2, \delta_1)$\\
\hline
 \end{tabular} 
 \end{center}
 
If $p \neq 2$, we show in the following table, that $\img(T+B)$ is surjective.
 \begin{center}
 \begin{tabular}{|c|c|c|}
 \hline
 $v_1$ &  $v_2$ & $T(v_1)+B(v_1, v_2)$\\
 \hline
 $(1, \beta_1, 0,0)$ & $(1,\beta_2,\gamma_2\neq 0,\delta_2)$ & $(\beta_2, \gamma_2, \beta_1\gamma_2, \delta_2)$\\
 \hline
 $(1,\beta_1,1,\delta_1)$ & $(0,\beta_2,0,\delta_2)$ & $(\beta_2+\beta_1, 0, -\beta_2, \delta_2)$\\
\hline
 \end{tabular} 
 \end{center} 
 
For all prime numbers $p$, this proves that $G$ has distinct sets as images of all possible five words in $W$.
\end{example}

\section{Word image impostors in extraspecial $p$-groups}
\label{impostors-in-extraspecials}

Let $G$ be an extraspecial $p$-group. Recall, from Theorem \ref{Winter-Theorem}, that $\Aut_{Z(G)}(G)/\Inn(G)$ is isomorphic to a subgroup $Q$ of the symplectic group $\Sp(V)$.  In fact, for $\varphi \in \Aut_{Z(G)}(G)$, we define $f_{\varphi} : V \to V$ by $f_{\varphi}(gZ(G)) = \varphi(g)Z(G)$. Then, by \cite[(3A), p. 161]{Winter_1972}, $f_{\varphi} \in \Sp(V)$. Further, if $f \in Q \subseteq \Sp(V)$, then by \cite[(3D) (3E), p. 162]{Winter_1972}, there exists $\varphi_f \in \Aut_{Z(G)}(G)$ such that $f_{\varphi_{f}} = f$. We shall examine the action  $\psi : Q \times V \to V$ given by $\psi(f,v) = f(v)$.

\begin{proposition}\label{if-isometric-then-automorphic}
Let $G, V, Q$ and $\psi$ be as above. Let $g, h \in G \setminus Z(G)$ and $v = gZ(G), w = hZ(G) \in V$. If $v$ and $w$ are in the same $\psi$-orbit then $g$ and $h$ are automorphic.
\end{proposition}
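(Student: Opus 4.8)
The plan is to realize the orbit equivalence $v \sim w$ by an automorphism of $G$ at the cost of an error lying in $Z(G)$, and then to absorb that error into an inner automorphism using Lemma~\ref{conjugacy-classes-of-extraspecial-p}.

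First, since $v$ and $w$ lie in the same $\psi$-orbit, I would choose $f \in Q$ with $f(v) = w$. By the discussion preceding the proposition (i.e.\ by \cite[(3D),(3E), p.~162]{Winter_1972}), $f$ lifts to an automorphism $\varphi_f \in \Aut_{Z(G)}(G)$ with $f_{\varphi_f} = f$. Unwinding the definition of $f_{\varphi_f}$, this gives $\varphi_f(g)Z(G) = f(gZ(G)) = f(v) = w = hZ(G)$, so that $\varphi_f(g) = hz$ for some $z \in Z(G)$.

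Next, I would absorb $z$ into an inner automorphism. Since $h \notin Z(G)$, Lemma~\ref{conjugacy-classes-of-extraspecial-p} says the conjugacy class of $h$ is exactly the coset $hZ(G)$, which contains $\varphi_f(g) = hz$. Hence there is $x \in G$ with $x^{-1}\varphi_f(g)\,x = h$. Writing $\iota$ for the inner automorphism $y \mapsto x^{-1}yx$, the composite $\varphi := \iota \circ \varphi_f$ is an automorphism of $G$ satisfying $\varphi(g) = \iota(hz) = h$, so $g$ and $h$ are automorphic.

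I do not expect a serious obstacle here: the only delicate point is that the lift $\varphi_f$ is determined only up to the central ambiguity $z$, and the purpose of isolating Lemma~\ref{conjugacy-classes-of-extraspecial-p} beforehand is precisely to dispose of that ambiguity, since in an extraspecial group a noncentral element is conjugate to every element of its $Z(G)$-coset. (One also uses, harmlessly, that $\varphi_f$ preserves $Z(G)$, so $\varphi_f(g)=hz$ is indeed noncentral.)
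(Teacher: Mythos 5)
Your proof is correct and follows essentially the same route as the paper: lift $f\in Q$ to $\varphi_f\in \Aut_{Z(G)}(G)$, observe that $\varphi_f(g)$ lands in the coset $hZ(G)$, and use Lemma~\ref{conjugacy-classes-of-extraspecial-p} to correct the central discrepancy by an inner automorphism. The only cosmetic difference is that the paper writes the central error as $z^{\ell}$ for a generator $z$ of $Z(G)$, which changes nothing.
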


\begin{proof}
Suppose ${\rm orbit}_{\psi}(v) = {\rm orbit}_{\psi}(w)$. Then,
$f(v) = w$ for some $f \in Q$, and
$$hZ(G) = w = f(v) = f_{\varphi_f}(v) = {\varphi}_f(g) Z(G).$$
Thus, 
${\varphi}_f(g) = h z^{\ell}$ for some $\ell\in \{0,1,\dots, p-1\}$, where $z$ is a generator of $Z(G)$. Since $h$ and $hz^{\ell}$ are conjugates in $G$ (see Lemma \ref{conjugacy-classes-of-extraspecial-p}), there exists $\rho \in \Inn(G)$ such that $\rho(h) = hz^{\ell} = {\varphi}_f(g)$. 
Hence ${\rho}^{-1}{\varphi}_f (g) = h$, and $g$ and $h$ are automorphic.
\end{proof}

The following corollary is immediate from the above proposition.
\begin{corollary}
Let $G, V, Q$ and $\psi$ be as above. Let $n_o$ be the number of nonzero orbits of the action $\psi$ and $n_c$ be the number of noncentral $\Aut(G)$ components of the group $G$. Then, $n_c \leq n_o$.
\end{corollary}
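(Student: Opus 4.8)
The plan is to produce an explicit surjection from the set of nonzero $\psi$-orbits in $V$ onto the set of noncentral $\Aut(G)$-orbits in $G$; such a surjection forces $n_c \leq n_o$. The whole argument is a packaging of Proposition \ref{if-isometric-then-automorphic} together with Lemma \ref{conjugacy-classes-of-extraspecial-p}.

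First I would set up the two actions on the nose. Automorphisms of $G$ preserve $Z(G)$, so $\Aut(G)$ acts on $G \setminus Z(G)$; its orbits are, by definition, the noncentral $\Aut(G)$-components, of which there are $n_c$. Likewise $Q$ acts on $V \setminus \{0\}$ (note $0$ is a fixed point of every element of $Q \subseteq \Sp(V)$, so the nonzero orbits are exactly the orbits meeting $V \setminus \{0\}$), giving $n_o$ orbits. The link between them is the canonical projection $\pi \colon G \setminus Z(G) \to V \setminus \{0\}$, $g \mapsto gZ(G)$, which is surjective since every nonzero vector of $V = G/Z(G)$ has a noncentral preimage.

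Next I would define a map $\Phi$ from the set of nonzero $\psi$-orbits to the set of noncentral $\Aut(G)$-orbits by: given a nonzero orbit $O$, pick $v \in O$, pick any $g \in \pi^{-1}(v)$, and let $\Phi(O)$ be the $\Aut(G)$-orbit of $g$. The only thing to verify is that this is independent of all choices, and this is exactly where the two earlier results enter. If $v' \in O$ is another element of the orbit, then any lift $g'$ of $v'$ is automorphic to $g$ by Proposition \ref{if-isometric-then-automorphic}. If instead $v' = v$ but $g'$ is a different lift, then $g' \in gZ(G)$, and by Lemma \ref{conjugacy-classes-of-extraspecial-p} the coset $gZ(G)$ is a single conjugacy class, so $g$ and $g'$ are conjugate, hence automorphic. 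Together these show ${\rm orbit}_{\Aut(G)}(g)$ depends only on $O$, so $\Phi$ is well defined.

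Finally I would check surjectivity of $\Phi$: given a noncentral $\Aut(G)$-orbit, choose a representative $g$ (so $g \notin Z(G)$), set $v = gZ(G) \neq 0$, and observe that $\Phi({\rm orbit}_{\psi}(v))$ is by construction the $\Aut(G)$-orbit of $g$. Hence $\Phi$ is a surjection from an $n_o$-element set onto an $n_c$-element set, and $n_c \leq n_o$. There is no serious obstacle here; the single point requiring care is the well-definedness of $\Phi$, which is precisely the place where Proposition \ref{if-isometric-then-automorphic} and Lemma \ref{conjugacy-classes-of-extraspecial-p} must be used in tandem.
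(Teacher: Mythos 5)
Your proof is correct and is exactly the argument the paper intends when it calls the corollary ``immediate'' from Proposition \ref{if-isometric-then-automorphic}: the surjection from nonzero $\psi$-orbits onto noncentral $\Aut(G)$-orbits is the natural way to package it. (One small economy: the case of two lifts $g,g'$ of the same vector $v$ is already covered by Proposition \ref{if-isometric-then-automorphic} with $v=w$, so the separate appeal to Lemma \ref{conjugacy-classes-of-extraspecial-p} there is redundant, though harmless.)
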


Rest of the section is divided into two subsections : $p = 2$ and $p \neq 2$.

\subsection{Case $p = 2$} 
Let $q : V \to GF(2)$ be the quadratic form associated to $G$. Then, by Theorem \ref{Winter-Theorem}, $Q$ is the orthogonal group $\Or(V,q)$.

\begin{lemma}\label{Witt-and-Orbit}
Let $G$ be an extraspecial $2$-group and $V = G/Z(G)$. Let $q : V \to GF(2)$ be the quadratic form associated to $G$. Then $v,w \in V \setminus \{0\}$ have the same orbit under the action $\psi : Q \times V \to V$ if and only if $q(v) = q(w)$.
\end{lemma}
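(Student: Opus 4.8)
The plan is to prove the two implications separately, with the forward direction being essentially immediate and the reverse direction being the substantive one that invokes Witt's theorem.

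For the forward implication, suppose $v$ and $w$ lie in the same $\psi$-orbit, so $w = f(v)$ for some $f \in Q = \Or(V,q)$. Since $f$ is by definition an isometry of the quadratic form $q$, we have $q(w) = q(f(v)) = q(v)$. This uses only the definition of the orthogonal group and Theorem \ref{Winter-Theorem}$(iii)$ identifying $Q$ with $\Or(V,q)$; nothing about extraspecial groups beyond what has already been set up is needed here.

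For the reverse implication, suppose $v, w \in V \setminus \{0\}$ with $q(v) = q(w)$. I want to produce $f \in \Or(V,q)$ with $f(v) = w$. The natural tool is Witt's extension theorem for quadratic forms over $GF(2)$: an isometry between subspaces (here the one-dimensional spaces $GF(2)v$ and $GF(2)w$, with the map $v \mapsto w$) extends to an isometry of the whole space $(V,q)$, provided the restricted forms agree — which is exactly the hypothesis $q(v) = q(w)$, together with the automatic compatibility on the (trivial) bilinear parts since these subspaces are $1$-dimensional and $B_G$ is alternating. One subtlety to address: the quadratic form $q_G$ associated to an extraspecial $2$-group is nondegenerate (its polar form $B_G$ is the nondegenerate alternating form from \S\ref{preliminaries}), so Witt's theorem applies cleanly; I should cite a standard reference for Witt's theorem in characteristic $2$ (e.g. the form valid for quadratic forms whose polar form is nondegenerate). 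Then $f \in \Or(V,q) = Q$ satisfies $f(v) = w$, so $v$ and $w$ are in the same $\psi$-orbit.

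The main obstacle is purely a matter of citing the correct version of Witt's theorem: in characteristic $2$ one must work with the quadratic form rather than its polar bilinear form, and the statement is about isometries of the quadratic form whose associated bilinear form is nondegenerate. Once the right statement is in hand, the application to $1$-dimensional subspaces is routine — there are no nondegeneracy obstructions to extending the map $v \mapsto w$ since the only datum to match is the value of $q$, which is given. I would also remark that $v, w \neq 0$ is needed only so that the subspaces are genuinely one-dimensional; the case $v = 0$ is trivial and excluded because $q(0) = 0$ would then force $w = 0$ anyway under the hypothesis only if $q$ is anisotropic, which it is not — hence the explicit restriction to $V \setminus \{0\}$ in the statement.
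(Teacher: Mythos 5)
Your proposal is correct and follows essentially the same route as the paper, which simply cites the Witt Extension Theorem in characteristic $2$ together with the identification $Q = \Or(V,q)$ from Theorem \ref{Winter-Theorem}$(iii)$; you merely spell out the easy forward implication and the extension of the partial isometry $v \mapsto w$ that the paper leaves implicit. The only caution is your closing aside about the exclusion of $0$, which is muddled but immaterial to the argument.
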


\begin{proof}
The lemma follows from Witt Extension Theorem in characteristic $2$ (see \cite[Theorem 8.3]{Elman-Karpenko-Merkurjev}), and the fact that in this characteristic, $Q = \Or(V,q)$.
\end{proof}

We observe that if $g \in G \setminus Z(G)$ and $v = gZ(G) \in V$ then order of $g$ is $2$ (resp. $4$) if and only if $q(v) = 0$ (resp. $q(v) = 1$). We use this observation in the proof of
the following theorem.

\begin{theorem}\label{aut-components-for-char-2}
Let $G$ be an extraspecial $2$-group.
\begin{enumerate}[(i).]
\item Two elements $g, h \in G$ are automorphic if and only if the following holds: (a). $g$ and $h$ have same orders, and (b). $g \in Z(G)$ iff $h \in Z(G)$.
\item Let $n$ be the number of orbits of natural ${\rm Aut}(G)$ action on $G$. Then,
$$
n = \begin{cases}
3, \quad \text{if } G \cong Q_2 \\
4, \quad \text{if } G \ncong Q_2
\end{cases}
$$
Here, $Q_2$ is the quaternion group of order $8$.
\end{enumerate}
\end{theorem}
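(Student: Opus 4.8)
The strategy is to deduce part (i) directly from the orbit description already assembled, and then to obtain part (ii) by a short case analysis on how many distinct values the quadratic form $q$ takes on $V\setminus\{0\}$. For part (i), the ``only if'' direction is trivial: automorphisms preserve order and fix $Z(G)$ setwise (indeed $Z(G)$ is characteristic). For the ``if'' direction, suppose $g,h$ have the same order and are both in $Z(G)$ or both outside it. If both lie in $Z(G)$, then since $|Z(G)|=2$ and $1$ is fixed, either both are $1$ or both equal the generator $z$, so they are trivially automorphic. If both lie in $G\setminus Z(G)$, set $v=gZ(G)$, $w=hZ(G)\in V\setminus\{0\}$. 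By the observation preceding the theorem, the common order of $g$ and $h$ (which is $2$ or $4$) forces $q(v)=q(w)$. By Lemma \ref{Witt-and-Orbit}, $v$ and $w$ lie in the same $\psi$-orbit, and then Proposition \ref{if-isometric-then-automorphic} gives that $g$ and $h$ are automorphic. This settles (i).

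For part (ii), I would count the $\mathrm{Aut}(G)$-orbits on $G$ using the classification in (i): an orbit is determined by the pair (is the element central?, order of the element). The central elements contribute the two orbits $\{1\}$ and $\{z\}$. The noncentral elements split according to whether $q(v)=0$ or $q(v)=1$ on $V\setminus\{0\}$, i.e.\ according to whether the corresponding group element has order $2$ or order $4$; by Lemma \ref{Witt-and-Orbit} each value that is actually attained on $V\setminus\{0\}$ gives exactly one orbit. So $n = 2 + \#\{\,q(V\setminus\{0\})\,\}$, and the whole question reduces to: for which extraspecial $2$-groups does $q$ take both values $0$ and $1$ on the nonzero vectors of $V$? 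Generically both values occur, giving $n=4$; the exceptional case is when $q(v)=1$ for \emph{every} $v\in V\setminus\{0\}$ (it cannot be identically $0$ on $V\setminus\{0\}$ since $q$ is nondegenerate and $V\neq 0$, so there is always an element of order $4$). A nonzero quadratic form that is anisotropic and takes only the value $1$ on nonzero vectors over $GF(2)$ exists only in dimension $2$, where it is the norm form of $GF(4)$; the associated extraspecial group is exactly $Q_2$, the quaternion group of order $8$. In that single case there are no noncentral involutions, so $n=2+1=3$; in all other cases $n=2+2=4$.

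To make the exceptional-case analysis rigorous I would argue as follows. If $\dim V \ge 4$, then $(V,q)$ contains a hyperbolic plane (every regular quadratic space of dimension $\ge 3$ over a finite field is isotropic), hence a nonzero vector with $q(v)=0$; combined with the existence of some $w$ with $q(w)=1$ this gives $n=4$. If $\dim V = 2$, then $q$ is either hyperbolic — in which case it has a nonzero isotropic vector and $n=4$, corresponding to the dihedral group $D_8$ — or anisotropic, in which case $q(v)=1$ for all $v\neq 0$, $G$ has no noncentral involution, and one checks $G\cong Q_2$; here $n=3$. Since $\dim V$ is always even and positive, these cases are exhaustive. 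The main obstacle is this last step: pinning down that ``$q$ takes only the value $1$ on $V\setminus\{0\}$'' happens precisely for $G\cong Q_2$, which rests on the Witt/classification facts about quadratic forms over $GF(2)$ and the standard identification of the two extraspecial groups of order $8$ with $D_8$ and $Q_2$ via their number of involutions. Everything else is a direct bookkeeping consequence of Proposition \ref{if-isometric-then-automorphic}, Lemma \ref{Witt-and-Orbit}, and the order-versus-$q(v)$ dictionary.
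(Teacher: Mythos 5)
Your proposal is correct and follows essentially the same route as the paper: part (i) via the order-versus-$q(v)$ dictionary, Lemma \ref{Witt-and-Orbit} and Proposition \ref{if-isometric-then-automorphic}, and part (ii) by counting $2$ central orbits plus one orbit per value of $q$ attained on $V\setminus\{0\}$. The only difference is that you supply a justification (via the classification of quadratic forms over $GF(2)$) for the dichotomy that $G\setminus Z(G)$ contains both involutions and order-$4$ elements unless $G\cong Q_2$, a fact the paper asserts without proof.
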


\begin{proof}
$(i)$. It is clear that if $g \in Z(G)$ then $g$ is automorphic to some $h \in G$ if and only if $g = h$. Now, let $g, h \in G \setminus Z(G)$ and $v,w$ be their respective images in $V$. If $g$ and $h$ are of the same order then $q(v) = q(w)$. By Lemma \ref{Witt-and-Orbit}, $v$ and $w$ are in the same $\psi$-orbit. Now, by Proposition \ref{if-isometric-then-automorphic}, $g$ and $h$ are automorphic.

$(ii)$. It follows from $(i)$ that there are two central orbits. If $G \cong Q_2$ then all elements of $G \setminus Z(G)$ are of order $4$, hence these are in the same orbit by part $(i)$. If $G \ncong Q_2$ then $G \setminus Z(G)$ contains elements of order $2$ and $4$. Thus, by part $(i)$, there are two noncentral orbits in this case.
\end{proof}

\subsection{Case $p \neq 2$} 
Let $G$ be an extraspecial $p$-group and $(B,T)$ be the pair
consisting of an alternating bilinear form $B:V \times V \to GF(p)$ and a linear map $T : V \to GF(p)$ that is associated to $G$. If ${\rm exp}(G) = p$ then $T = 0$.

\begin{lemma}\label{Witt-and-Orbit-Odd-p}
Let $G$ be the extraspecial $p$-group with ${\rm exp}(G) = p$.
Let $V, Q, \psi$ be as in the beginning of this section. Then the action $\psi$ is transitive on $V \setminus \{0\}$.
\end{lemma}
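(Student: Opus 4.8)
The plan is to reduce the statement to the classical fact that a symplectic group acts transitively on nonzero vectors. Since $\exp(G) = p$, part $(ii)(a)$ of Theorem \ref{Winter-Theorem} identifies $Q$ with the \emph{full} symplectic group $\Sp(V)$ of the nondegenerate alternating form $B = B_G$ on $V$, and under this identification the action $\psi$ is exactly the natural linear action of $\Sp(V)$ on $V$. So the content to be proved is: $\Sp(V)$ is transitive on $V \setminus \{0\}$.

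First I would record the standard hyperbolic-basis construction. Given any nonzero $v \in V$, nondegeneracy of $B$ produces $v' \in V$ with $B(v,v') = 1$; then $v, v'$ span a hyperbolic plane $H$ on which $B$ is nondegenerate, whence $V = H \perp H^{\perp}$ with $B|_{H^{\perp}}$ again nondegenerate. Iterating this on $H^{\perp}$ yields a symplectic basis $v = e_1, f_1, e_2, f_2, \dots, e_k, f_k$ of $V$ whose \emph{first} vector is the prescribed vector $v$. Now, given two nonzero vectors $v, w \in V$, build symplectic bases $\mathcal{B}_v$ and $\mathcal{B}_w$ starting with $v$ and $w$ respectively; the unique linear map sending $\mathcal{B}_v$ to $\mathcal{B}_w$ termwise carries the symplectic Gram matrix to itself, hence lies in $\Sp(V)$, and it sends $v$ to $w$. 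This gives transitivity. (Alternatively, one may invoke the Witt extension theorem for alternating forms, applied to the isometry $\langle v \rangle \to \langle w \rangle$, $v \mapsto w$, which is automatically an isometry as both lines are totally isotropic; this is the odd-characteristic analogue of the argument used in Lemma \ref{Witt-and-Orbit}.)

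I expect essentially no obstacle here: the only point demanding care is the identification of the action $\psi$ with the natural $\Sp(V)$-action, which is precisely the part of Winter's theorem recalled at the start of this section, and the rest is the elementary existence of symplectic bases. I would keep the write-up short, citing a standard reference for the symplectic-basis/Witt fact and otherwise giving the one-line change-of-basis argument above.
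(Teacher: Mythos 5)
Your proposal is correct and follows exactly the same route as the paper: identify $Q$ with $\Sp(V)$ via Theorem \ref{Winter-Theorem}$(ii)(a)$ and invoke transitivity of $\Sp(V)$ on $V \setminus \{0\}$. The only difference is that the paper simply cites this transitivity from a standard reference, whereas you supply the (correct) hyperbolic-basis argument for it.
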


\begin{proof}
The lemma follows from the transitivity of $\Sp(V)$ action on $V \setminus \{0\}$ (see \cite[Theorem 3.3]{Wilson-Book}), and the fact that in odd characteristic, $Q = \Sp(V)$ for ${\rm exp}(G) = p$ case.
\end{proof}

\begin{theorem}\label{aut-components-for-char-p-exp-p}
Let $G$ be the extraspecial $p$-group with ${\rm exp}(G) = p$.
\begin{enumerate}[(i).]
\item Two elements $g, h \in G$ are automorphic if and only if the following holds: (a). $g$ and $h$ have same orders, and (b). $g \in Z(G)$ iff $h \in Z(G)$.
\item The natural ${\rm Aut}(G)$ action on $G$ has three orbits.
\end{enumerate}
\end{theorem}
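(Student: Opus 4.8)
The plan is to argue exactly as in the proof of Theorem \ref{aut-components-for-char-2}, with Lemma \ref{Witt-and-Orbit-Odd-p} playing the role that Lemma \ref{Witt-and-Orbit} played there. For the ``only if'' direction of $(i)$, I would simply observe that any $\varphi \in \Aut(G)$ preserves element orders and preserves the characteristic subgroup $Z(G)$, so conditions (a) and (b) are necessary.

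For the ``if'' direction I would split into two cases according to whether $g$ is central. If $g \in Z(G)$, then by (b) also $h \in Z(G)$; since $\exp(G) = p$ and $|Z(G)| = p$, condition (a) leaves only two possibilities, namely $g = h = 1$, or both $g$ and $h$ have order $p$. In the latter case I would invoke Theorem \ref{Winter-Theorem}$(i)$: there is $\theta \in \Aut(G)$ whose restriction to the cyclic group $Z(G)$ of order $p$ is a surjective power map, so $\langle \theta \rangle$ acts transitively on $Z(G) \setminus \{1\}$ and hence $g$ and $h$ are automorphic. If instead $g \notin Z(G)$, then by (b) also $h \notin Z(G)$; setting $v = gZ(G)$ and $w = hZ(G)$ in $V \setminus \{0\}$, Lemma \ref{Witt-and-Orbit-Odd-p} gives that $v$ and $w$ lie in the same $\psi$-orbit, and then Proposition \ref{if-isometric-then-automorphic} yields that $g$ and $h$ are automorphic.

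For $(ii)$, it follows immediately from $(i)$ that the orbits of the natural $\Aut(G)$-action on $G$ are exactly $\{1\}$, $Z(G) \setminus \{1\}$, and $G \setminus Z(G)$; all three are nonempty because $p$ is odd (so $|Z(G)| = p \geq 3$) and $G$ is nonabelian. Hence there are three orbits.

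I do not expect a real obstacle here; the point to keep in mind is that, unlike in the $p = 2$ case, the hypothesis $\exp(G) = p$ makes the order condition (a) essentially vacuous among nonidentity elements, so the entire content of the statement lies in the two transitivity assertions (on $Z(G)\setminus\{1\}$ via $\theta$, and on $V \setminus \{0\}$ via $Q = \Sp(V)$), both of which are already available.
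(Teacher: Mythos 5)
Your proposal is correct and follows essentially the same route as the paper: transitivity on $Z(G)\setminus\{1\}$ via the automorphism $\theta$ from Theorem \ref{Winter-Theorem}$(i)$, transitivity on the noncentral elements via Lemma \ref{Witt-and-Orbit-Odd-p} combined with Proposition \ref{if-isometric-then-automorphic}, and the resulting three orbits $\{1\}$, $Z(G)\setminus\{1\}$, $G\setminus Z(G)$. You merely spell out the (routine) ``only if'' direction and the nonemptiness of the orbits, which the paper leaves implicit.
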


\begin{proof}
$(i)$. By Theorem \ref{Winter-Theorem}$(i)$ , it is clear that if $g, h \in Z(G) \setminus \{1\}$ then $g$ and $h$ are automorphic.
Now, let $g, h \in G \setminus Z(G)$ and $v,w$ be their respective images in $V$. By Lemma \ref{Witt-and-Orbit-Odd-p}, $v$ and $w$ are in the same $\psi$-orbit. Now, by Proposition \ref{if-isometric-then-automorphic}, $g$ and $h$ are automorphic.

$(ii)$. From $(i)$ it follows that there are two central orbits. Since all elements of $G \setminus Z(G)$ have the same order $p$, they are in the same orbit.
\end{proof}

We now turn our attention to the case of extraspecial $p$-groups $G$ with ${\rm exp}(G) = p^2$, where $p$ is an odd
prime. Let $B: V \times V \to S$ be the alternating nondegenerate bilinear form and $T : V \to S$ be the linear map associated to $T$, as in \S\ref{preliminaries}. Then, $V$ has a basis $\mathcal B = \{v_1, w_1, v_2, w_2, \cdots, v_n, w_n\}$ such that $B(v_i, w_i) = 1$ for $1 \leq i \leq n$, and, 
$B(v_i, w_j) = B(v_i, v_j) = B(w_i, w_j) = 0$ for $i \neq j$, $T(v_1) = 1$ and $T(u) = 0$ for $u \in \mathcal B \setminus \{v_1\}$ (see \cite[Prop. 2.5]{Dilpreet2019}). We refer to such a basis as a \emph{special symplectic basis} for $B$.

\begin{lemma}\label{Witt-and-Orbit-Odd-p-minus}
Let $G$ be the extraspecial-$p$ group with ${\rm exp}(G) = p^2$. 
Let $V, Q, \psi$ be as in the beginning of this section. Let $\mathcal B = \{v_1, w_1, v_2, w_2, \cdots, v_n, w_n\}$ be a special symplectic basis for $B$.
\begin{enumerate}[(i).]
\item Let $v,w \in V \setminus \{0\}$
be two distinct vectors. Then, ${\rm orbit}_{\psi}(v) = {\rm orbit}_{\psi}(w)$ if $T(v)=T(w)$ and either $v,w \notin {\rm ker}(T)$ or $v,w \notin {\rm span}(w_1)$.

\item If $|G| = p^3$, the action $\psi$ has exactly $2p-2$ nonzero distinct orbits. These are represented by the elements of the form $av_1, bw_1$, where $a,b \in GF(p) \setminus \{0\}$.

\item If $|G| > p^3$, the action $\psi$ has exactly $2p-1$ nonzero distinct orbits. These are represented the elements of the form $av_1, bw_1, v_2$, where $a,b \in GF(p) \setminus \{0\}$.

\end{enumerate}
\end{lemma}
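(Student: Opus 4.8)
The plan is to identify the group $Q$ explicitly, record the $Q$-invariant that separates most orbits, and then manufacture enough symplectic transvections lying in $Q$ to merge the rest. First, if $\varphi \in \Aut_{Z(G)}(G)$ and $v = gZ(G)$, then $T(f_{\varphi}(v)) = \varphi(g)^p = \varphi(g^p) = g^p = T(v)$ because $\varphi$ fixes $Z(G)$ pointwise; hence $T \circ f = T$ for every $f \in Q$, and by Winter's theorem this inclusion is an equality, so $Q = \{f \in \Sp(V) : T \circ f = T\}$. In the special symplectic basis $\mathcal B$ one checks $T = B(\,\cdot\,, w_1)$, since both functionals send $v_1$ to $1$ and vanish on $\mathcal B \setminus \{v_1\}$; therefore $Q = \mathrm{Stab}_{\Sp(V)}(w_1)$, $\ker T = w_1^{\perp}$, and $\mathrm{span}(w_1)$ is the radical of the restriction $B|_{\ker T}$. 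Two consequences will drive the count: (1) $v \mapsto T(v) = B(v, w_1)$ is $Q$-invariant, so every orbit lies inside a single fibre $T^{-1}(c)$; and (2) every element of $Q$ fixes $w_1$, hence fixes $\mathrm{span}(w_1)$ pointwise, so each $bw_1$ with $b \in GF(p)^{\times}$ is a one-point orbit.

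For $u \in \ker T$ and $\lambda \in GF(p)$ the symplectic transvection $\tau_{u,\lambda}(x) = x + \lambda B(x,u)\,u$ satisfies $T \circ \tau_{u,\lambda} = T$ (as $T(u) = 0$), hence lies in $Q$, and it maps $\ker T$ into itself. I would then prove: (a) $Q$ acts transitively on $T^{-1}(c)$ for each $c \in GF(p)^{\times}$; and (b) if $|G| > p^3$, $Q$ acts transitively on $\ker T \setminus \mathrm{span}(w_1)$. For (a): given distinct $v, v' \in T^{-1}(c)$, pick $v'' \in T^{-1}(c)$ with $B(v'',v) \neq 0 \neq B(v'',v')$; this is possible because $v^{\perp}$ and $(v')^{\perp}$ are linear hyperplanes while $T^{-1}(c)$ is an affine hyperplane avoiding $0$, so $|T^{-1}(c) \setminus (v^{\perp} \cup (v')^{\perp})| \geq p^{2n-1} - 2p^{2n-2} = p^{2n-2}(p-2) > 0$ since $p \geq 3$. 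Then $v - v''$ and $v' - v''$ lie in $\ker T$, so $\tau_{v-v'',\lambda}, \tau_{v'-v'',\mu} \in Q$, and taking $\lambda = B(v'',v)^{-1}$, $\mu = B(v'',v')^{-1}$ sends $v''$ to $v$ and to $v'$ respectively; hence $v$ and $v'$ share a $Q$-orbit. For (b): the same argument is run inside $\ker T$, for $v \in \ker T \setminus \mathrm{span}(w_1)$ the form $B(v,\cdot)|_{\ker T}$ is nonzero (as $v$ lies outside the radical $\mathrm{span}(w_1)$), so $v^{\perp} \cap \ker T$ is a hyperplane of $\ker T$, the same count supplies an intermediate $v'' \in \ker T$, and the connecting transvections have direction in $\ker T$, hence lie in $Q$.

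Part (i) is then exactly (a) together with (b): if $T(v) = T(w) \neq 0$ then $v$ and $w$ lie in a common fibre $T^{-1}(c)$ with $c \neq 0$ and (a) applies; if $T(v) = T(w) = 0$ and $v, w \notin \mathrm{span}(w_1)$ then $v, w \in \ker T \setminus \mathrm{span}(w_1)$ and (b) applies (this is vacuous when $|G| = p^3$, since then $\ker T = \mathrm{span}(w_1)$). For the orbit count, $V \setminus \{0\}$ is partitioned by the $Q$-invariant $T$ into the $p-1$ fibres $T^{-1}(c)$, $c \neq 0$ --- each a single orbit, with representatives $av_1$ ($a \in GF(p)^{\times}$) by (a) --- together with $\ker T \setminus \{0\}$; inside the latter, $\mathrm{span}(w_1) \setminus \{0\} = \{bw_1 : b \neq 0\}$ contributes $p-1$ one-point orbits, while $\ker T \setminus \mathrm{span}(w_1)$ is empty if $|G| = p^3$ and, if $|G| > p^3$, a single orbit with representative $v_2$ by (b). Summing gives $2p-2$ nonzero orbits when $|G| = p^3$ and $2p-1$ when $|G| > p^3$, with the stated representatives.

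The main obstacle is the first step --- precisely, the equality $Q = \mathrm{Stab}_{\Sp(V)}(w_1)$ rather than just the easy inclusion $Q \subseteq \mathrm{Stab}_{\Sp(V)}(w_1)$: it is the reverse inclusion that guarantees $Q$ contains every transvection $\tau_{u,\lambda}$ with $u \in \ker T$, on which the transitivity arguments rest, and it relies on Winter's description of $\Aut(G)$ for exponent-$p^2$ extraspecial groups (Theorem \ref{Winter-Theorem}). The rest is routine but needs care in the low-rank case $|G| = p^3$ and in checking that the pointwise-fixed line $\mathrm{span}(w_1)$ cannot fuse with $\ker T \setminus \mathrm{span}(w_1)$ --- which it cannot, being fixed pointwise while the complement is a single non-trivial orbit.
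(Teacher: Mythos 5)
Your proof is correct, and it takes a genuinely different route from the paper's. The paper first normalizes an arbitrary $v$ to the form $a_1v_1+b_1w_1+a_2v_2$ with $a_2\in\{0,1\}$ (using transitivity of $\Sp(U)$ on $U\setminus\{0\}$ for $U={\rm span}(\mathcal B\setminus\{v_1,w_1\})$), and then exhibits three explicit matrices $f_1,f_2,f_3\in Q$ that fuse the normal forms; you instead identify $Q$ as $\{f\in\Sp(V):T\circ f=T\}={\rm Stab}_{\Sp(V)}(w_1)$ via the observation $T=B(\,\cdot\,,w_1)$, and generate enough of $Q$ with symplectic transvections $\tau_{u,\lambda}$, $u\in{\rm ker}(T)$, connecting any two vectors in a common fibre of $T$ (or in ${\rm ker}(T)\setminus{\rm span}(w_1)$) through a third vector supplied by the count $p^{2n-1}-2p^{2n-2}=p^{2n-2}(p-2)>0$, valid since $p$ is odd. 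Your approach is cleaner and more uniform: it dispenses with the normal-form case analysis and the matrix verifications, and it handles the rank-one case $|G|=p^3$ without special pleading. The orbit count itself (the $T$-fibres as invariants, the singleton orbits $\{bw_1\}$ from $f(w_1)=w_1$, and the single residual orbit of $v_2$ when $|G|>p^3$) is essentially the same in both arguments. Both proofs rest on the same external input from Winter, namely that $Q$ is exactly the $T$-preserving (equivalently, $w_1$-stabilizing) subgroup of $\Sp(V)$: the paper uses the inclusion $Q\subseteq{\rm Stab}(w_1)$ explicitly via \cite[(4A)]{Winter_1972} and the reverse inclusion implicitly when it asserts that its matrices $f_i$ lie in $Q$, whereas you isolate this equality up front as the one nontrivial prerequisite --- which is the right place to put the emphasis.
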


\begin{proof}
We first prove $(i)$.
We claim that there exists $v' \in {\rm orbit}_{\psi}(v)$ such that
$v'$ is of the form $a_1v_1+b_1w_1+a_2v_2$, where $a_2 \in \{0, 1\} \subseteq GF(p)$. To see this, let $U := {\rm span}(\mathcal B \setminus \{v_1, w_1\})$. The restriction of $T$ to $U$ is the zero map and the restriction of $B$ to $U \times U$ is a nondegenerate alternating bilinear form. Let $p_U:V \to U$ be the natural projection by suppressing $v_1$ and $w_1$.
If $p_U(v) = 0$ then the claim holds with $a_2 = 0$.
If $p_U(v) \neq 0$, then by the transitivity of $\Sp(U)$ action on $U \setminus \{0\}$ (see \cite[Theorem 3.3]{Wilson-Book}), there exists 
$f \in \Sp(U)$ such that $f(p_U(v)) = v_2$. We extend $f$ to $f' \in \Sp(V)$ by defining $f'(v_1) = v_1$ and $f'(w_1) = w_1$. 
Then $v' := f'(v) \in {\rm orbit}_{\psi}(v)$ is of the form $a_1v_1 + b_1w_1 + v_2$. We use the same argument to assert that there exists
$w' \in {\rm orbit}_{\psi}(v)$ such that
$w'$ is of the form $c_1v_1 + d_1w_1 + c_2v_2$, where $c_2 \in \{0, 1\} \subseteq GF(p)$.

Thus, to start with, we assume that $p_U(v)$ and $p_U(w)$ are either $0$ or $v_2$. Further, by the hypothesis $T(v) = T(w)$ we conclude that $a_1 = c_1$. Now, let us consider the two non-disjoint cases.

\noindent {\bfseries Case 1}. $v,w \notin {\rm ker}(T)$.

In this case we have $a_1\ne 0$. If $a_2=0$, then we define an isometry $f_1$ of $V$ whose matrix with respect to the basis $\mathcal B$ is

$$\left(
\begin{matrix}
    1         & 0 & 0 & \dots & 0 \\
    \alpha_1  & 1 & 0 & \dots & 0 \\
    0         & 0 & 1 & \dots & 0 \\
    \hdotsfor{5} \\
    0         & 0 & 0 & \dots & 1
\end{matrix}\right).
$$
Here $\alpha_1 \in GF(p)$ is such that ${a_1}\alpha_1 \equiv b_1 \mod p$. It is easy to check that $f_1 \in Q$ and $f_1(a_1v_1)=a_1v_1+b_1w_1=v$. Thus, $v$ and $a_1v_1$ are in the same $\psi$-orbit. 

If $a_2 =1$ then we define an isometry $f_2$ of $V$ whose matrix with respect to the basis $\mathcal B$ is

$$\left(
\begin{matrix}
    1        & 0 & 0        & 0   & \dots & 0 & 0 \\
    0        & 1 & 0        & -1  & \dots & 0 & 0 \\
    \beta_1  & 0 & \beta_1  & 0   & \dots & 0 & 0 \\
    0        & 0 & 0        & a_1 & \dots & 0 & 0 \\
    \hdotsfor{7}                              \\
    0        & 0 & 0        & 0   & \dots & 1 & 0 \\
    0        & 0 & 0        & 0   & \dots & 0 & 1
\end{matrix}\right).
$$
Here $\beta_1$ is such that ${a_1}\beta_1 \equiv 1 \mod p$. Again, it is easy to check that $f_2 \in Q$ and $f_1(f_2(a_1v_1))=f_1(a_1v_1+v_2)=a_1v_1+b_1w_1+v_2$. Since $a_2\in \{0,1\}$, we conclude that $v$ and $a_1v_1$ are in the same $\psi$-orbit in this case.

Replacing $v$ by $w$ in the above argument we conclude that $w$ and $a_1v_1$ are in the same $\psi$-orbit. Thus ${\rm orbit}_{\psi}(v) = 
{\rm orbit}_{\psi}(w)$. \\

\noindent{\bfseries Case 2}. $v,w \notin {\rm span}(w_1)$.

The case $1$ allows us to assume that $v,w \in {\rm ker}(T)$. Thus, $a_1 = c_1 = 0$.
Further, since $v,w \notin {\rm span}(w_1)$, we have $a_2 = c_2 = 1$. We define an isometry $f_3$ of $V$ whose matrix with respect to the basis $\mathcal B$ is

$$\left(
\begin{matrix}
    1        & 0 & 0      & 0 & 0 & \dots & 0   \\
    0        & 1 & b_1    & 0 & 0 & \dots & 0   \\
    0        & 0 & 1      & 0 & 0 & \dots & 0   \\
    b_1      & 0 & 0      & 1 & 0 & \dots & 0   \\
    0        & 0 & 0      & 0 & 1 & \dots & 0   \\
    \hdotsfor{5}                            \\
    0        & 0 & 0      & 0 & 0 & \dots & 1
\end{matrix}\right).
$$
Again, $f_3 \in Q$ and $f_3(v_2)=b_1w_1+v_2=v$. Similarly, $w$ and $v_2$ are in the same $\psi$-orbit. Thus ${\rm orbit}_{\psi}(v) = 
{\rm orbit}_{\psi}(w)$.

Now we prove $(ii)$ and $(iii)$. Let $v \in V \setminus\{0\}$. As in the proof of $(i)$, we may assume that
$v = a_1v_1 + b_1w_1 + a_2 v_2$.

If $v\notin {\rm ker}(T)$ then, again by part $(i)$, $v\in {\rm orbit}_{\psi}(a_1v_1)$. Since $T\circ f=T,\forall f\in Q$ and $T(\alpha v_1)\neq T(\beta v_1)$ if $\alpha \neq \beta$, the orbits ${\rm orbit}_{\psi}(a_1v_1), a_1\in GF(p)\setminus \{0\}$ are all distinct.

If $v \in {\rm ker}(T)$, then $a_1 = 0$. Hence, $v = b_1w_1 + a_2 v_2$. If $a_2 = 0$, then $v= b_1w_1$. By \cite[(4A), p. 164]{Winter_1972}, we have $f(w_1) = w_1, \forall f\in Q$. Thus the orbits ${\rm orbit}_{\psi}(b_1w_1)$ are all singleton.
If $a_2 \neq 0$ then $v = b_1w_1 + a_2v_2 \notin {\rm span}(w_1)$ and $|G| > p^3$. In this case by part $(i)$, $v \in {\rm orbit}_{\psi}(v_2)$. Since, $0 = T(v_2) \neq T(a_1v_1) = a_1$ for $a_1 \neq 0$, the orbit ${\rm orbit}_{\psi}(v_2)$ is distinct from the orbits ${\rm orbit}_{\psi}(a_1v_1)$.

Thus, the orbits of $\psi$ are as asserted in 
$(ii)$ and $(iii)$.
\end{proof}

\begin{theorem}\label{aut-components-for-char-p-exp-p-square}
Let $G$ be the extraspecial $p$-group with ${\rm exp}(G)=p^2$.
\begin{enumerate}[(i).]
\item Let $V, B, T, \psi$ be as in lemma \ref{Witt-and-Orbit-Odd-p-minus} and $\mathcal B = \{v_1, w_1, v_2, w_2, \cdots, v_n, w_n\}$ be the special symplectic basis for $B$. Let $g,h \in G$ be such that $gZ(G), hZ(G) \notin {\rm span}(w_1)\setminus\{0\} \subseteq V$. Two elements $g, h \in G$ are automorphic if and
only if the following holds: (a). $g$ and $h$ have same orders, and (b). $g \in Z(G)$ iff $h \in Z(G)$.

\item Let $n$ be the number of orbits of natural ${\rm Aut}(G)$ action on $G$. Then,
$$
n = \begin{cases}
p+2, \quad \text{if } |G| = p^3 \\
p+3, \quad \text{if } |G| > p^3
\end{cases}
$$
\end{enumerate}
\end{theorem}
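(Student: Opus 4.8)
The plan is to combine three ingredients: the classification of $\psi$-orbits on $V\setminus\{0\}$ from Lemma \ref{Witt-and-Orbit-Odd-p-minus}, the bridge ``same $\psi$-orbit $\Rightarrow$ automorphic'' from Proposition \ref{if-isometric-then-automorphic}, and the structure $\Aut(G)=\langle\theta\rangle\,\Aut_{Z(G)}(G)$ from Theorem \ref{Winter-Theorem}(i); to these I add one short computation recording how the extra automorphism $\theta$ acts on $V$. Write $\pi:G\to V$ for the quotient map and let $\lambda\in GF(p)\setminus\{0\}$ be the scalar by which $\theta$ acts on $Z(G)$. By the conditions in Theorem \ref{Winter-Theorem}(i) the restriction of $\theta$ to $Z(G)$ has order $p-1$, so $\lambda$ is a primitive root modulo $p$. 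Since $B_G$ is the commutator map and $T_G$ is the $p$-th power map, applying $\theta$ shows that the induced map $\bar\theta\in GL(V)$ satisfies $B(\bar\theta v,\bar\theta w)=\lambda B(v,w)$ and $T(\bar\theta v)=\lambda T(v)$ for all $v,w\in V$; in particular $\bar\theta(\ker T)=\ker T$ and $T(\bar\theta^{\,k}v)=\lambda^{k}T(v)$.

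\emph{Part (i).} The ``only if'' direction is immediate since automorphisms preserve orders and the center. For ``if'', assume $g,h$ have the same order and $g\in Z(G)\Leftrightarrow h\in Z(G)$. If both lie in $Z(G)$, then either both are trivial or both have order $p$, and in the latter case a suitable power of $\theta$ carries $g$ to $h$ by Theorem \ref{Winter-Theorem}(i). If neither lies in $Z(G)$, set $v=\pi(g)$, $w=\pi(h)$; then $v,w\neq 0$ and, by hypothesis, $v,w\notin{\rm span}(w_1)$. If $g,h$ have order $p^2$, then $v,w\notin\ker T$; pick $k$ with $\lambda^{k}T(v)=T(w)$, so that $v':=\bar\theta^{\,k}v=\pi(\theta^{k}g)$ satisfies $T(v')=T(w)$ with $v',w\notin\ker T$, whence ${\rm orbit}_{\psi}(v')={\rm orbit}_{\psi}(w)$ by Lemma \ref{Witt-and-Orbit-Odd-p-minus}(i), and $\theta^{k}g$ and $h$ — hence $g$ and $h$ — are automorphic by Proposition \ref{if-isometric-then-automorphic}. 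If $g,h$ have order $p$, then $v,w\in\ker T\setminus{\rm span}(w_1)$, so $T(v)=T(w)=0$ and Lemma \ref{Witt-and-Orbit-Odd-p-minus}(i) applies directly; Proposition \ref{if-isometric-then-automorphic} again concludes.

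\emph{Part (ii).} There are exactly two central orbits, $\{1\}$ and $Z(G)\setminus\{1\}$, the second being a single orbit by Theorem \ref{Winter-Theorem}(i). I claim $\Aut(G)$ acts trivially on ${\rm span}(w_1)\subseteq V$. Every $f\in Q$ fixes $w_1$ (this is (4A) of \cite{Winter_1972}, already used in the proof of Lemma \ref{Witt-and-Orbit-Odd-p-minus}), and inner automorphisms act trivially on $V$; since $\Aut(G)=\langle\theta\rangle\,\Aut_{Z(G)}(G)$, it remains to check $\bar\theta(w_1)=w_1$. A short computation with the special symplectic basis shows ${\rm span}(w_1)$ is the radical of the restriction of $B$ to $\ker T$; as $\bar\theta$ preserves $\ker T$ and scales $B$ by $\lambda$, it preserves this radical, say $\bar\theta(w_1)=\mu w_1$. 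Writing $\bar\theta(v_1)=\lambda v_1+u$ with $u\in\ker T$ (forced by $T(\bar\theta v_1)=\lambda$) and pairing, $\lambda=B(\bar\theta v_1,\bar\theta w_1)=\mu\bigl(\lambda B(v_1,w_1)+B(u,w_1)\bigr)=\lambda\mu$, since $B(u,w_1)=0$ for $u\in\ker T$; thus $\mu=1$. Consequently, for each $b\in GF(p)\setminus\{0\}$ the coset $\pi^{-1}(bw_1)$ is $\Aut(G)$-invariant and, being a conjugacy class by Lemma \ref{conjugacy-classes-of-extraspecial-p}, is a single $\Aut(G)$-orbit of elements of order $p$; this gives $p-1$ distinct noncentral orbits. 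Every remaining noncentral element has image in $V\setminus{\rm span}(w_1)$, so by Part (i) these elements split into orbits by order alone. If $|G|=p^3$ then $\ker T={\rm span}(w_1)$, so all of them have order $p^2$, giving one further orbit and total $2+(p-1)+1=p+2$. If $|G|>p^3$, both orders $p$ and $p^2$ occur among them (say on $\pi^{-1}(v_2)$ and $\pi^{-1}(v_1)$), giving two further orbits and total $2+(p-1)+2=p+3$.

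\emph{Expected main obstacle.} The only step beyond bookkeeping of earlier results is the claim $\bar\theta(w_1)=w_1$, i.e.\ that $\Aut(G)$ acts trivially on the line ${\rm span}(w_1)$: it is exactly what forces the ``exceptional'' vectors $bw_1$ to contribute precisely $p-1$ noncentral orbits, and it hinges on identifying ${\rm span}(w_1)$ intrinsically as the radical of $B|_{\ker T}$ so that the scalar by which $\bar\theta$ acts on it can be pinned to $1$ through the pairing with $\bar\theta(v_1)$.
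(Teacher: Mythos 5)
Your proposal is correct, and at the level of overall strategy it is the same as the paper's: classify the $\psi$-orbits via Lemma \ref{Witt-and-Orbit-Odd-p-minus}, lift them to $\Aut(G)$-orbits via Proposition \ref{if-isometric-then-automorphic}, treat the line ${\rm span}(w_1)$ separately using Lemma \ref{conjugacy-classes-of-extraspecial-p}, and count. Where you differ is in how two sub-steps are discharged, and in both places you replace a citation to Winter with a direct computation. In part (i), for noncentral elements of order $p^2$ with $T(v)\neq T(w)$, the paper shows $g$ is automorphic to $g_1^{\alpha}$ and $h$ to $g_1^{\beta}$ and then invokes \cite[(3B)]{Winter_1972} to link $g_1^{\alpha}$ with $g_1^{\beta}$; you instead precompose with $\theta^{k}$ to equalize the $T$-values before applying the lemma --- the same underlying mechanism ($\theta$ acting as a primitive power map on $Z(G)$), packaged so that Lemma \ref{Witt-and-Orbit-Odd-p-minus}(i) is applied only once. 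In part (ii), the paper quotes \cite[Corollary 1]{Winter_1972} for the $\Aut(G)$-invariance of the coset over $w_1$; you rederive it by identifying ${\rm span}(w_1)$ intrinsically as the radical of $B|_{\ker T}$ and pinning $\bar\theta(w_1)=w_1$ through the pairing with $\bar\theta(v_1)$. That computation is correct (including the degenerate case $|G|=p^3$, where $B|_{\ker T}=0$ and the radical is still ${\rm span}(w_1)$) and makes the argument more self-contained; the cost is that you must also separately note that $Q$ fixes $w_1$ (Winter's (4A), which the paper also uses) and that inner automorphisms act trivially on $V$. The remaining bookkeeping --- $p-1$ singleton-coset orbits over ${\rm span}(w_1)\setminus\{0\}$, plus one or two orbits off that line according as $|G|=p^3$ or $|G|>p^3$ --- matches the paper exactly.
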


\begin{proof}
$(i)$. Let $g,h \in G$ be the elements of the same order which are either both central or both noncentral.
By Theorem \ref{Winter-Theorem}$(i)$ , it is clear that if $g, h \in Z(G)$ then $g$ and $h$ are automorphic. Now suppose that $g, h \in G \setminus Z(G)$. Let $v, w$ be their respective images in $V$. Since $g$ and $h$ have same orders, $v \in {\rm ker}(T)$ iff $w\in {\rm ker}(T)$. 

Suppose $v,w \in {\rm ker}(T)$. As $v, w \notin {\rm span}(w_1)$, we conclude from Lemma \ref{Witt-and-Orbit-Odd-p-minus}$(i)$ that $v$ and $w$ are in the same $\psi$-orbit. 
Thus, by Proposition \ref{if-isometric-then-automorphic}, $g$ and $h$ are automorphic.

Suppose $v,w \notin {\rm ker}(T)$. Then $T(v) = T(\alpha v_1) = \alpha$ and $T(w) = T(\beta v_1) = \beta$ for some nonzero
$\alpha, \beta \in GF(p)$. As $v, w \notin {\rm span}(w_1)$, from Lemma \ref{Witt-and-Orbit-Odd-p-minus}$(i)$, $v$ and 
$\alpha v_1$ are in the same orbit, and $w$ and $\beta v_1$ are in the same $\psi$-orbit. If $v_1 = g_1 Z(G) \in V$, then by Proposition \ref{if-isometric-then-automorphic}, $g$ and $g_1^{\alpha}$ are automorphic. Similarly, $h$ and $g_1^{\beta}$ are automorphic. Now, by \cite[(3B), p. 161]{Winter_1972}, $g_1^{\alpha}$ and $g_1^{\beta}$ are automorphic. This shows that $g$ and $h$ are automorphic.

$(ii)$. By Theorem \ref{Winter-Theorem}$(i)$, the ${\rm Aut}(G)$ action has two central orbits. Let $g \in G\setminus Z(G)$ be such that $gZ(G) = w_1$. By \cite[Corollary 1]{Winter_1972}, $gZ(G)$ is an ${\rm Aut}(G)$-invariant subset. Let $\varphi \in {\rm Aut}(G)$. Then, $\varphi(g)=gh$ for some $h \in Z(G)$, and for
each $\alpha \in GF(p) \setminus \{0\}$, $\alpha w_1 =   g^{\alpha}Z(G) \in V$. 
If $z \in Z(G)$ then $\varphi (g^{\alpha}z) = g^{\alpha}{h}^{\alpha}\varphi(z) \in g^{\alpha} Z(G)$.

Thus, for each $\alpha \in GF(p)\setminus \{0\}$, 
$\alpha w_1 \in V$ corresponds to a noncentral 
${\rm Aut}(G)$-invariant subset of $G$. By Lemma \ref{conjugacy-classes-of-extraspecial-p}, this ${\rm Aut}(G)$-invariant subset is an orbit of ${\rm Aut}(G)$ action. 

If $|G|=p^3$ then, by part $(i)$, the elements $g$ in $G \setminus Z(G)$ such that $gZ(G) \notin {\rm span}(w_1) \subseteq V$ are in the same ${\rm Aut}(G)$ orbit. Thus, the total number of ${\rm Aut}(G)$ orbits in this case is $2$ (central orbits) + $p-1$ (corresponding to each $\alpha w_1$) + $1$ (corresponding to $gZ(G) \notin {\rm span}(w_1$)) = $p+2$.

If $|G| > p^3$ then, by part $(i)$, the elements $g$ in $G \setminus Z(G)$ such that $gZ(G) \notin {\rm span}(w_1) \subseteq V$ split into two ${\rm Aut}(G)$ orbits. Thus, the total number of ${\rm Aut}(G)$ orbits in this case is $2$ (central orbits) + $p-1$ (corresponding to each $\alpha w_1$) + $2$ (corresponding to $gZ(G) \notin {\rm span}(w_1$)) = $p+3$.
\end{proof}

Now onward, $p$ denotes a prime without a restriction of being even or odd. The following theorem gives a complete description of word images for extraspecial-$p$ groups.

\begin{theorem}\label{three-word-images}
Let $G$ be an extraspecial-$p$ group. Then, the only word images of $G$ are $\{1\}$, $Z(G)$ and $G$. 
\end{theorem}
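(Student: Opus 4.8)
The plan is to combine the $2$-exhaustive set from Theorem~\ref{exhaustive-set-in-nilpotent-class-2} with the explicit orbit computations carried out in this section. Since $G$ is an extraspecial $p$-group, it is nilpotent of class $2$ with $\exp(G) \in \{p, p^2\}$, $\exp(G') = p$, and $\exp(G/Z(G)) = p$. Feeding these into Theorem~\ref{exhaustive-set-in-nilpotent-class-2}, the divisibility constraints $m \mid e$, $n \mid f$, $n \le e'$ force $m \in \{1, p, p^2\}$ (or just $\{1,p\}$ when $\exp(G) = p$) and $n \in \{1, p\}$, but $n \le e' = p$ together with $n \mid p$ gives $n \in \{1,p\}$, and $[x,y^p] = [x,y]^p = 1$ in $G$ since $\exp(G') = p$; so effectively the only words to consider are $1$, $x$, $x^p$, $x^{p^2}$, $[x,y]$, $x^p[x,y]$, $x^{p^2}[x,y]$. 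The words $1$ and $x^{p^2}$ (when $\exp(G) = p^2$, note $x^{p^2}$ is trivial on $G$ too, and when $\exp(G)=p$ already $x^p$ is trivial) give $\{1\}$; the word $x$ gives $G$; and $[x,y]$ gives $G' = Z(G)$ since the image of $B_G$ spans and in fact equals $Z(G)$. So the entire content is to show that each of the remaining words $x^p$ and $x^p[x,y]$ (and, when relevant, $x^{p^2}[x,y]$, which equals $[x,y]$ on $G$) has image equal to one of $\{1\}$, $Z(G)$, $G$.

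First I would dispose of $x^p[x,y]$. By Lemma~\ref{powers-of-commutators} and the structure of class-$2$ groups, $w(G)$ for $w = x^p[x,y]$ is a union of $\Aut(G)$-orbits (being a word image, it is an $\Aut(G)$-invariant set containing $1$); moreover the image is a union of conjugacy classes. The key point is that $a^p \in Z(G)$ for every $a \in G$, so $a^p[a,b] \in Z(G)$, whence $w(G) \subseteq Z(G)$. Now I would argue $w(G) = Z(G)$ or $\{1\}$: the image is $\Aut(G)$-invariant in $Z(G)$, and by Theorem~\ref{Winter-Theorem}(i) the automorphism $\theta$ acts on $Z(G) \setminus \{1\}$ transitively via a surjective power map, so the only $\Aut(G)$-invariant subsets of $Z(G)$ containing $1$ are $\{1\}$ and $Z(G)$ itself. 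Thus $w(G) \in \{\{1\}, Z(G)\}$.

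The remaining word is $x^p$, i.e.\ the $p$-th power map, whose image is $q_G(V)$ (if $p=2$) or $T_G(V)$ (if $p$ odd) pushed into $Z(G)$ — in either case a subset of $Z(G)$, again $\Aut(G)$-invariant and containing $1$, so by the same argument $x^p(G) \in \{\{1\}, Z(G)\}$. (If $\exp(G) = p$ it is $\{1\}$; if $\exp(G) = p^2$, $T_G$ or $q_G$ is nonzero, so it is $Z(G)$.) The word $x^{p^2}$ is identically trivial on $G$ since $\exp(G) \mid p^2$. Collecting: every word in the $2$-exhaustive set $W$ has image in $\{\{1\}, Z(G), G\}$, and each of these three is realized ($1$, $[x,y]$, $x$ respectively), so these are exactly the word images.

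The main obstacle is not any single hard computation but rather assembling the pieces cleanly: one must be careful that $W$ as given in Theorem~\ref{exhaustive-set-in-nilpotent-class-2} really does reduce to the short list above under the numerical constraints for extraspecial groups (in particular that $[x,y^p]$ collapses because $\exp(G') = p$, and that powers beyond $x^p$ add nothing because $\exp(G) \le p^2$), and that the reduction "$\Aut(G)$-invariant subset of $Z(G)$ containing $1$ $\Rightarrow$ equals $\{1\}$ or $Z(G)$" is legitimately supplied by Winter's Theorem~\ref{Winter-Theorem}(i). Everything else — $a^p \in Z(G)$, $[G,G] = Z(G)$, images of words are $\Aut(G)$-invariant — is immediate from the preliminaries.
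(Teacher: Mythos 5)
Your core mechanism is the right one, and it is the same as the paper's: a nonsurjective word image lands in $Z(G)$, and since Theorem \ref{Winter-Theorem}(i) makes $Z(G)\setminus\{1\}$ a single $\Aut(G)$-orbit, the only $\Aut(G)$-invariant subsets of $Z(G)$ containing $1$ are $\{1\}$ and $Z(G)$ itself. There is, however, a scope gap in how you frame the argument. You reduce everything to the $2$-exhaustive set $W$ of Theorem \ref{exhaustive-set-in-nilpotent-class-2}, but by definition a $2$-exhaustive set only accounts for words $w\in F_2$, whereas the theorem concerns word images $w(G)$ for $w\in F_d$ with $d$ arbitrary (the paper genuinely uses this generality later, e.g.\ in Lemma \ref{word-image} and Theorem \ref{special-through-word-images}, where the relevant word lives in a free group of large rank). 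Nothing proved in the paper says that every $d$-variable word image on a class-$2$ group is already the image of some $2$-variable word, so your reduction, as written, does not cover all words.

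The fix is already contained in your own treatment of $x^p[x,y]$, and it is exactly the paper's proof: for any $d$ and any nonsurjective $w\in F_d$, Lemma \ref{if-nonsurjective-then-in-Frattini} gives $w(G)\subseteq\Phi(G)=Z(G)$ directly, and then the $\Aut(G)$-invariance argument forces $w(G)\in\{\{1\},Z(G)\}$; the three candidates are all realized (by the trivial word, $[x,y]$, and $x$). Arguing this way, the enumeration of $W$ and the case analysis of $x^p$, $x^{p^2}$, $[x,y^p]$ via $q_G$, $T_G$, $B_G$ — all of which is correct but unnecessary — can be dropped entirely, and the restriction to two variables disappears.
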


\begin{proof}
The images $\{1\}$ and $G$ are the obvious ones. Suppose $w : G^{(d)} \rightarrow G$ is a word map with $w(G) \neq \{1\}, G$. There are indeed such words, {\it e.g.} the commutator word $w = xyx^{-1}y^{-1}$. By Lemma \ref{if-nonsurjective-then-in-Frattini},  $w(G) \subseteq \Phi(G) = Z(G)$. The last equality holds because $G$ is an extraspecial-$p$ group. Since $w(G)$ is invariant under automorphisms and,  by Theorem \ref{Winter-Theorem}$(i)$ , all elements of $Z(G) \setminus \{1\}$ are automorphic, $w(G) = Z(G)$.
\end{proof}

\begin{theorem}\label{counting-impostors-in-extraspecials}
Let $G$ be an extraspecial-$p$ group and $i_G$ be the number of word image impostors in $G$.
\begin{enumerate}[(i).]
\item If $p = 2$ then 
$$i_G = 
\begin{cases}
1, \quad \text{if } G\cong Q_2 \\
5, \quad \text{if } G\ncong Q_2
\end{cases}
$$

\item If $p \neq 2$ then
$$i_G = 
\begin{cases}
1, ~\quad \quad \quad \quad \text{if } ${\rm exp}(G) = p$ \\
2^{p+1}-3, \quad \text{if } {\rm exp}(G) = p^2 \text{ and }
|G| = p^3 \\
2^{p+2}-3, \quad \text{if } {\rm exp}(G) = p^2 \text{ and }
|G| > p^3 \\
\end{cases}
$$
\end{enumerate}
\end{theorem}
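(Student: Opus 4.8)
The plan is to reduce the count to a purely combinatorial one. First I would observe that a subset $A \subseteq G$ is a word image candidate precisely when it is a union of $\mathrm{Aut}(G)$-orbits one of which is the singleton $\{1\}$: the $\mathrm{Aut}(G)$-invariance forces $A$ to be a union of orbits, and the requirement $1 \in A$ forces the orbit $\{1\}$ to be among them. Hence, writing $n$ for the number of $\mathrm{Aut}(G)$-orbits on $G$, the word image candidates correspond bijectively to subsets of the remaining $n-1$ orbits, so there are exactly $2^{n-1}$ of them.

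Next I would invoke Theorem \ref{three-word-images}, which says the only word images in $G$ are $\{1\}$, $Z(G)$ and $G$. These three subsets are pairwise distinct, since $G$ is nonabelian and $|Z(G)| = p > 1$, and each is indeed a word image candidate: for $Z(G)$ one uses Theorem \ref{Winter-Theorem}$(i)$, by which $Z(G)\setminus\{1\}$ is a single $\mathrm{Aut}(G)$-orbit, so $Z(G) = \{1\} \cup (Z(G)\setminus\{1\})$ is a union of two orbits. Therefore exactly $3$ of the $2^{n-1}$ candidates are genuine word images, and consequently $i_G = 2^{n-1} - 3$.

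It then remains only to substitute the value of $n$ in each case, all of which have already been determined. For $p = 2$, Theorem \ref{aut-components-for-char-2}$(ii)$ gives $n = 3$ if $G \cong Q_2$ and $n = 4$ otherwise, yielding $i_G = 2^{2} - 3 = 1$ and $i_G = 2^{3} - 3 = 5$ respectively. For odd $p$ with $\exp(G) = p$, Theorem \ref{aut-components-for-char-p-exp-p}$(ii)$ gives $n = 3$, so $i_G = 1$. For odd $p$ with $\exp(G) = p^{2}$, Theorem \ref{aut-components-for-char-p-exp-p-square}$(ii)$ gives $n = p+2$ when $|G| = p^{3}$ and $n = p+3$ when $|G| > p^{3}$, whence $i_G = 2^{p+1} - 3$ and $i_G = 2^{p+2} - 3$ respectively, which is exactly the asserted table.

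Since every ingredient is already in place, there is no genuine obstacle here; the only points requiring a little care are bookkeeping ones — remembering that the orbit $\{1\}$ is forced to lie in every candidate (hence the exponent $n-1$ rather than $n$), and checking that the three distinguished word images $\{1\}, Z(G), G$ are distinct and really are candidates, so that subtracting $3$ is legitimate (in particular $n \geq 3$ in every case, so $2^{n-1} \geq 4 > 3$ and $i_G$ comes out a nonnegative integer, as it must).
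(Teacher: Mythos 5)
Your argument is correct and is essentially identical to the paper's proof: both count word image candidates as $2^{n-1}$ via unions of $\mathrm{Aut}(G)$-orbits containing $\{1\}$, subtract the three genuine word images from Theorem \ref{three-word-images}, and substitute the orbit counts from Theorems \ref{aut-components-for-char-2}, \ref{aut-components-for-char-p-exp-p} and \ref{aut-components-for-char-p-exp-p-square}. Your additional checks that $\{1\}$, $Z(G)$, $G$ are distinct candidates are a welcome bit of extra care but do not change the route.
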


\begin{proof}
If the number of orbits of $G$ under the natural ${\rm Aut}(G)$ action is $n$, then the number of word image candidates is $2^{n-1}$. By Theorem \ref{three-word-images}, only $3$ of these are word images. Thus $i_G = 2^{n-1} -3$. 

The result follows directly by substituting the value of $n$ from Theorem \ref{aut-components-for-char-2},
Theorem \ref{aut-components-for-char-p-exp-p} and Theorem \ref{aut-components-for-char-p-exp-p-square}.
\end{proof}

It is evident from above theorem that if $p$ is odd and ${\rm exp}(G) = p^2$ then the number of word image impostors grows exponentially with $p$.

\section{Detecting special $p$-groups through word images}
\label{special-p-using-word-images}
We end this article with an observation on the converse of the Theorem \ref{three-word-images}. The converse is not true. The direct products of extraspecial $p$-groups with themselves are counterexamples. Interestingly, a weaker form of the converse is true. To formulate it we first record a lemma.

\begin{lemma}{\label{word-image}}
The commutator subgroup of a group $G$ is always a word image. Moreover, if $G$ is a $p$-group then $\Phi(G)$ is also a word image.
\end{lemma}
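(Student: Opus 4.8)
The statement has two parts. For the first part, I would exhibit an explicit word whose image on any group $G$ is exactly $G' = [G,G]$. The natural candidate is the commutator word $w = [x,y] = xyx^{-1}y^{-1} \in F_2$ — but this is not quite right, since $[x,y](G)$ is the set of commutators, which need not equal $G'$ in general. To fix this, I would pass to a word in more variables: for $d$ sufficiently large (depending on $G$), take $w_d = [x_1,y_1][x_2,y_2]\cdots[x_d,y_d] \in F_{2d}$. Since $G$ is finite, every element of $G'$ is a product of boundedly many commutators, so for $d$ large enough $w_d(G) = G'$. Here one must note that the definition of word map in the paper allows words in $F_d$ for arbitrary $d$, so this is legitimate; no uniform bound on $d$ is needed. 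Alternatively, and perhaps more cleanly, one can observe that $w_{d+1}(G) \supseteq w_d(G)$ always, and that the union over all $d$ is a subgroup of $G$ contained in $G'$ and containing all commutators, hence equals $G'$; finiteness then makes the chain stabilize.

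For the second part, suppose $G$ is a finite $p$-group. Recall $\Phi(G) = G^p[G,G]$. I would combine the commutator word above with a $p$-th power word. Concretely, on extra variables take the word $u = x_0^p \, w_d(x_1,y_1,\dots,x_d,y_d)$, or better, to ensure the image is exactly the subgroup $\Phi(G)$ rather than merely a generating set, use a product $x_0^p x_1^p \cdots x_k^p \, w_d(\dots)$ with enough $p$-th-power factors that every element of $G^p$ (which is a subgroup, since in a $p$-group $G^p$ is generated by $p$-th powers and finiteness bounds the number of factors needed) arises. Then the image of this word is $\{a_0^p\cdots a_k^p c : a_i \in G, c \in G'\} = G^p \cdot G' = \Phi(G)$, using that $G^p$ and $G'$ are both normal (indeed $G' \subseteq \Phi(G)$ and $G^p$ is normal) so the product set is a subgroup. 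One subtlety: $G^p$ as a set of $p$-th powers versus $G^p$ as the subgroup they generate — in a finite group these differ in general, so the argument genuinely needs the multi-variable word $x_1^p\cdots x_k^p$ with $k$ chosen large, exactly as with the commutator word.

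The main obstacle — really the only point requiring care — is the distinction between the \emph{set} of values of a one-variable power/commutator word and the \emph{subgroup} it generates: neither the set of commutators nor the set of $p$-th powers is a priori closed under multiplication. The plan handles this by allowing the number of variables to grow, which is harmless since the paper's framework permits words in $F_d$ for any $d$ and since $G$ is finite (so the ascending chain of images of $[x_1,y_1]\cdots[x_d,y_d]$, resp.\ of $x_1^p\cdots x_d^p \cdot c_{I,J}$, stabilizes at $G'$, resp.\ at $\Phi(G)$). Everything else — normality of $G'$ and $G^p$ in a $p$-group, the identity $\Phi(G) = G^p G'$, and the fact that a product of two normal subgroups is a subgroup — is standard and already invoked elsewhere in the paper (e.g.\ in Lemma \ref{if-nonsurjective-then-in-Frattini}).
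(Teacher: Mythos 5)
Your proof is correct and follows essentially the same route as the paper: a product-of-commutators word $[x_1,y_1]\cdots[x_d,y_d]$ with $d$ large enough to exhaust $[G,G]$, augmented by $p$-th power factors for $\Phi(G)=G^p[G,G]$. The only difference is that the paper uses a single factor $x_0^p$ rather than your $x_0^p\cdots x_k^p$; that already suffices because modulo the commutator part the $p$-th power map is a homomorphism on the abelian quotient $G/[G,G]$, so the set $\{a^p[G,G]:a\in G\}$ is already the full subgroup $G^p[G,G]/[G,G]$ --- but your extra variables are harmless and your version is equally valid.
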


\begin{proof}
Let $g \in [G,G]$. Let $\ell_g$ be the minimal number of commutators required to write $g$ as a product of commutators. Let $l_G=\underset{g\in [G,G]}\max(l_g)$.
Then, for $w = [x_1, x_2] \cdots [x_{2\ell_G-1}, x_{2\ell_G}] \in \mathbb{F}_{2\ell_G}$, we indeed have
$w(G) = [G,G]$.

Now, if $G$ is a $p$-group then $\Phi(G)=G^p[G,G] = w'(G)$,
where $w' = x_{2\ell_G+1}^p w$. Thus $\Phi(G)$ is a word image. 
\end{proof}

\begin{theorem}\label{special-through-word-images}
Let $G$ be a nonabelian finite group such that the only  word images of $G$ are $\{1\}$, $Z(G)$ and $G$. Then, $G$ is a special $p$-group for some prime $p$.
\end{theorem}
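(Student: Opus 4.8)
The plan is to show that a nonabelian finite group $G$ whose only word images are $\{1\}$, $Z(G)$ and $G$ must be a $p$-group, then that it is nilpotent of class $2$, and finally that $Z(G) = [G,G] = \Phi(G)$ is elementary abelian. First I would apply Lemma \ref{word-image}: the commutator subgroup $[G,G]$ is always a word image, and since $G$ is nonabelian, $[G,G] \neq \{1\}$. Hence $[G,G] \in \{Z(G), G\}$. If $[G,G] = G$ then $G$ is perfect; but a perfect group has no proper subgroup containing it as, e.g., the image of a noncentral noninvertible word — more to the point, I would rule this out by exhibiting a word image strictly between $\{1\}$ and $G$ other than $Z(G)$, or by noting $G$ nonabelian perfect forces $Z(G) \subsetneq [G,G] = G$ and then analyzing power maps. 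The cleaner route: since $[G,G]$ is a word image and equals $Z(G)$ or $G$, and since $G$ is nonabelian with $Z(G) \neq G$, we must have $[G,G] = Z(G)$, which already forces $G$ to be nilpotent of class $2$. (If instead $[G,G]=G$, one derives a contradiction from the constraint that power-map images $g \mapsto g^n$ are word images whose values must lie in $\{1\}, Z(G), G\}$.)

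Next, with $G$ nilpotent of class $2$, I would show $G$ is a $p$-group. Write $G = H_{p_1} \times \cdots \times H_{p_k}$ as the product of its Sylow subgroups. The image of the power word $x^n$ is a word image, hence one of $\{1\}, Z(G), G$. Choosing $n = \exp(G)/p_i$ for various $i$ produces subgroups that are visibly neither $\{1\}$ nor $G$ when $k \geq 2$ or when more than one prime divides $\exp(G)/|Z(G)|$-type data; comparing these forces $k = 1$. So $G$ is a $p$-group, and then $\Phi(G) = G^p[G,G]$ is a word image by Lemma \ref{word-image}, so $\Phi(G) \in \{Z(G), G\}$; since $G$ is a nontrivial $p$-group $\Phi(G) \neq G$, whence $\Phi(G) = Z(G)$. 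Combined with $[G,G] = Z(G)$ from the previous step, we get $Z(G) = [G,G] = \Phi(G)$.

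Finally I must check this common subgroup is elementary abelian. It is abelian (being $Z(G)$) and it is a $p$-group. Its exponent divides $p$ because: $Z(G) = \Phi(G) = G^p[G,G]$, and applying Theorem \ref{exhaustive-set-in-nilpotent-class-2} (valid since $G$ is class $2$), every word image of $G$ has the form $x^m[x,y^n](G)$ with $m \mid \exp(G)$, $n \mid \exp(G/Z(G))$; the hypothesis that the only such images are $\{1\}, Z(G), G$ severely restricts $\exp(G)$ and $\exp(Z(G))$. Concretely, if $\exp(Z(G)) = p^2$ or $\exp(G) = p^3$ one can produce a word image $x^p(G)$ or $x^p[x,y](G)$ distinct from all three, a contradiction; this pins $\exp(G) = p^2$ (or $p$, but then $G$ abelian) and $\exp(Z(G)) = p$, i.e. $Z(G)$ is elementary abelian. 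Thus $G$ is a special $p$-group.

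The main obstacle I anticipate is the first step — cleanly excluding the perfect case $[G,G] = G$ and, more generally, extracting from "only three word images" enough leverage on power maps and on $\exp(Z(G))$ to force the $p$-group and elementary-abelian conclusions. The constraints from Theorem \ref{exhaustive-set-in-nilpotent-class-2} only become available once class $2$ is established, so the argument has to be staged carefully: first $[G,G] = Z(G)$ (using only Lemma \ref{word-image}), then class $2$, then the $p$-group reduction, and only then the exponent analysis via the $2$-exhaustive set.
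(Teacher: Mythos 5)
Your overall architecture is workable, but the load-bearing first step has a genuine gap. From Lemma \ref{word-image} you correctly get $[G,G]\in\{Z(G),G\}$, but the deduction ``since $G$ is nonabelian with $Z(G)\neq G$, we must have $[G,G]=Z(G)$'' is a non sequitur: nothing you have said excludes $[G,G]=G$, i.e.\ the perfect case. You acknowledge this and defer it to a parenthetical (``one derives a contradiction from power maps''), but that contradiction is never derived, and everything downstream --- nilpotency of class $2$, the Sylow decomposition $G=H_{p_1}\times\cdots\times H_{p_k}$, the appeal to Theorem \ref{exhaustive-set-in-nilpotent-class-2} --- depends on it. The gap is fixable: for $p\mid\exp(G)$ the map $g\mapsto g^p$ is non-injective, hence $w_p(G)\subsetneq G$, so $w_p(G)\in\{\{1\},Z(G)\}$; the first option makes $G$ a $p$-group (hence nilpotent, hence not perfect), and the second makes $G/Z(G)$ of exponent $p$, hence a $p$-group, hence $G$ nilpotent --- but this argument must actually be run, and once it is run it essentially reproduces the paper's opening moves. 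The paper sidesteps your chicken-and-egg problem entirely by working in the opposite order: it first uses power words $x^p$, $x^{p'}$, $x^{p^2}$ to show $Z(G)\neq 1$, that $G$ is a $p$-group, and that $\exp(G)\in\{p,p^2\}$; only then does it invoke Lemma \ref{word-image} to get $[G,G]=\Phi(G)=Z(G)$ (both being nontrivial \emph{proper} word images, properness now being automatic since $\Phi(G)\subsetneq G$ for $p$-groups), and finishes with $w_p(\Phi(G))=w_{p^2}(G)=\{1\}$. In particular the $2$-exhaustive set of Theorem \ref{exhaustive-set-in-nilpotent-class-2} is never needed.

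Two smaller points. First, your parenthetical ``$\exp(G)=p$, but then $G$ abelian'' is false (extraspecial groups of exponent $p$ are nonabelian); fortunately nothing rests on it, since $\exp(G)=p$ makes $\exp(\Phi(G))=p$ trivially. Second, the steps ``visibly neither $\{1\}$ nor $G$'' and ``comparing these forces $k=1$'' need to be made precise: with $n=\exp(G)/p_i$ one has $w_n(G)=\prod_j w_n(H_{p_j})$ with $w_n(H_{p_j})=\{1\}$ for $j\neq i$, which cannot equal $Z(G)=\prod_j Z(H_{p_j})$ when $k\geq 2$ because each nontrivial $p_j$-group has nontrivial center. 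As written these are claims, not proofs.
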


\begin{proof}
We first observe that $\{1\}, Z(G)$ and $G$ are distinct.
Since $G$ is nonabelian, $Z(G) \neq G$.
If $Z(G) = \{1\}$ then, by hypothesis, $G$ does not have a nontrivial proper word image. Denote the $n^{\rm th}$ power word $x^n$ by $w_n$. If $p$ is a prime divisor of ${\rm exp}(G)$ then $w_p(G)$ is a proper subset of $G$. 
Thus $w_p(G) = \{1\}$. This shows that ${\rm exp}(G) = p$ and $G$ is a $p$-group. This is a contradiction to $Z(G) = \{1\}$.

We now show that, under the hypothesis of the theorem, $G$ is a $p$-group. If we assume the contrary, then there exist distinct primes $p$ and $p'$ dividing 
$\exp(G)$. 
Since the word images $w_p(G)$ and $w_{p'}(G)$ are proper,
$w_p(G)=w_{p'}(G)=Z(G)$.
Consequently, for every $g \in G$ there exists $h \in G$ such that $g^p = h^{p'}$. Thus, for all $g \in G$,
$$g^{p({\rm exp}(G)/p')} = h^{p'({\rm exp}(G)/p')} = h^{{\rm exp}(G)}=1$$
and ${\rm exp}(G)$ divides $p({\rm exp}(G)/p')$. This holds iff $p = p'$. This is a contradiction.

We now show that ${\rm exp}(G)=p$ or $p^2$. If ${\rm exp}(G)>p^2$, then by the hypothesis, $w_p(G)=w_{p^2}(G)=Z(G)$.
Along the lines of the argument in the previous paragraph, it is easy to arrive at the contradiction that ${\rm exp}(G)$ divides ${\rm exp}(G)/p$.

By Lemma \ref{word-image}, $[G,G], \Phi (G)$ are nontrivial proper word images and by the hypothesis $Z(G)$ is a nontrivial proper word image. Thus, $[G,G] = \Phi (G) = Z(G)$.

Thus far we have shown that $G$ is a $p$-group of exponent at most $p^2$ with $[G,G] = \Phi (G) = Z(G)$. To prove that $G$ is special $p$-group it remains to be shown that ${\rm exp}(\Phi(G)) = p$. If ${\rm exp}(G) = p$, then ${\rm exp}(\Phi(G)) = p$ is a triviality. If ${\rm exp}(G) = p^2$, then $w_p(G), \Phi (G)$, $Z(G)$ are nontrivial proper word images, and hence these are equal. Thus 
$w_p(\Phi(G)) = w_p(w_p(G)) = w_{p^2}(G) = \{1\}$. This show that ${\rm exp}(\Phi(G)) = p$ and we conclude that $G$ is a special $p$-group.
\end{proof}

\bibliographystyle{amsalpha}
\bibliography{word-maps}

\providecommand{\bysame}{\leavevmode\hbox to3em{\hrulefill}\thinspace}
\providecommand{\MR}{\relax\ifhmode\unskip\space\fi MR }
\providecommand{\MRhref}[2]{%
  \href{http://www.ams.org/mathscinet-getitem?mr=#1}{#2}
}
\providecommand{\href}[2]{#2}
\begin{thebibliography}{EKM08}

\bibitem[CH18]{CockeHoChirality}
William Cocke and Meng-Che Ho, \emph{On the symmetry of images of word maps in
  groups}, Comm. Algebra \textbf{46} (2018), no.~2, 756--763. \MR{3764894}

\bibitem[EKM08]{Elman-Karpenko-Merkurjev}
Richard Elman, Nikita Karpenko, and Alexander Merkurjev, \emph{The algebraic
  and geometric theory of quadratic forms}, American Mathematical Society
  Colloquium Publications, vol.~56, American Mathematical Society, Providence,
  RI, 2008. \MR{2427530}

\bibitem[GK00]{Guralnick-Kantor_2000}
Robert~M. Guralnick and William~M. Kantor, \emph{Probabilistic generation of
  finite simple groups}, vol. 234, 2000, Special issue in honor of Helmut
  Wielandt, pp.~743--792. \MR{1800754}

\bibitem[Gor80]{GorensteinBook}
Daniel Gorenstein, \emph{Finite groups}, second ed., Chelsea Publishing Co.,
  New York, 1980. \MR{569209}

\bibitem[Kau19]{Dilpreet2019}
Dilpreet Kaur, \emph{Classification of extraspecial p-groups using quadratic
  forms}, The Mathematics Student \textbf{88} (2019), no.~3--4, 27--38.

\bibitem[Lub14]{Lubotzky_2014}
Alexander Lubotzky, \emph{Images of word maps in finite simple groups}, Glasg.
  Math. J. \textbf{56} (2014), no.~2, 465--469. \MR{3187911}

\bibitem[NZ11]{ObedPaper}
Obed Ntabuhashe~Zahinda, \emph{Les 2-groupes sp\'{e}ciaux totalement
  orthogonaux}, Comm. Algebra \textbf{39} (2011), no.~2, 435--451. \MR{2773312}

\bibitem[Wil09]{Wilson-Book}
Robert~A. Wilson, \emph{The finite simple groups}, Graduate Texts in
  Mathematics, vol. 251, Springer-Verlag London, Ltd., London, 2009.
  \MR{2562037}

\bibitem[Win72]{Winter_1972}
David~L. Winter, \emph{The automorphism group of an extraspecial {$p$}-group},
  Rocky Mountain J. Math. \textbf{2} (1972), no.~2, 159--168. \MR{297859}

\end{thebibliography}

\end{document}